\DeclareMathOperator\Arg{Arg}
\newtheorem{thm}{Theorem}[section]
\newtheorem{lem}{Lemma}[section]
\newtheorem{deff}{Definition}[section]
\newtheorem{prop}{Proposition}[section]
\newtheorem{cor}{Corollary}[section]
\newtheorem{conj}{Conjecture}[section]
\theoremstyle{definition}
\newtheorem{rem}{Remark}
\def\Q {\mathbb{Q}}
\def\R {\mathbb{R}}
\def\o {\mbox{Orb}}
\def\uL {\mathbb{L}_{\text{ultra}}}
\def\b {\beta}
\def\o {\omega}
\def\a {\alpha}
\newcommand{\Z}{\mathbb{Z}}
\newcommand{\QQ}{\overline{\Q}}
\newcommand{\C}{\mathbb{C}}
\newcommand{\e}{\epsilon}
\newcommand{\pq}{p_k/q_k}
\newcommand{\g}{\gamma}
\newcommand{\cL}{\mathcal{L}}
\begin{document}

\title[The Transcendence of Power Towers of Liouville Numbers]{On the Transcendence of Power Towers of Liouville Numbers}

\author[D. Marques]{Diego Marques}
\address{Departamento de Matem\'atica\\
Universidade de Bras\'ilia\\
Bras\'ilia, DF\\
Brazil}
\email{diego@mat.unb.br}

\author[M. Oliveira]{Marcelo Oliveira}
\address{Departamento de Matem\'atica\\
Universidade Estadual do Maranhão\\
Brazil}
\email{marcelo\_mat@yahoo.com}

 \author[P. Trojovsk\' y]{Pavel Trojovsk\' y}
	\address{Faculty of Science\\
 University of Hradec Kr\'alov\'e\\
 Czech Republic}
	\email{pavel.trojovsky@uhk.cz}

\keywords{Liouville numbers, power tower, transcendental numbers, linear forms in logarithms}
\subjclass[2010]{Primary 11J81; Secondary 11J86}

\begin{abstract}
In this paper, among other things, we explicit a $G_{\delta}$-dense set of Liouville numbers, for which the triple power tower of any of its elements is a transcendental number.
\end{abstract}

\maketitle

\section{Introduction}\label{intro_section}

The beginning of the transcendental number theory happened in 1844 when Liouville \cite{liouville} showed that real algebraic numbers are not ``well-approximated"\ by rational numbers. By using this fact, Liouville explicitly, for the first time, provided examples of transcendental numbers (known as Liouville numbers now). We recall that a real number $\xi$ is called a {\it Liouville number} if there exist infinitely many rational numbers $p_k/q_k$, with $q_k>1$, such that
\begin{center}\label{cl}
$\displaystyle \left| \xi - \frac{p_k}{q_k} \right| < q_k^{-\o_k}$, $\forall k$,
\end{center}
for some sequence of positive real numbers $(\o_k)_k$ which tends to $\infty$ as $k\to \infty$. The first example of a Liouville number (and consequently, of a transcendental number) is the known {\it Liouville constant} defined by $\ell:=\sum_{n\geq 1}10^{-n!}$ (i.e., the decimal with $1$'s in each factorial position and $0$'s otherwise). The set of Liouville numbers is denoted by $\mathbb{L}$.

In his pioneering book, Maillet \cite[Chapitre III]{mai} discussed some arithmetic properties of Liouville numbers. One of them is that given a non-constant rational function $f$, with rational coefficients, if $\xi$ is a Liouville number, then so is $f(\xi)$. Thus, a natural problem concerns the possible arithmetic behavior of $f(\xi)$ when $\xi\in \mathbb{L}$ and $f$ is a transcendental function. In particular, it remains unsolved the question raised by Mahler \cite{mah3} about the existence of a transcendental entire function taking $\mathbb{L}$ into itself. However, there are some results on this subject. For instance, Chalebgwa and Morris \cite{bull} proved, that  $e^{\xi}$ is transcendental whenever $\xi\in \mathbb{L}$. Also, Marques and Oliveira \cite{MO} showed that $\a^{\xi}$ is transcendental, for all Liouville number $\xi$ and all algebraic number $\a\not\in \{0,1\}$. By using Baire's theorem (see \cite{Tang}), one has that for any non-constant function $f$, there is a $G_{\delta}$-dense set $\cL$ of Liouville numbers such that $f(\cL)\subseteq \mathbb{L}$. 

Set, inductively, $\exp[n](x) = \exp(\exp[n-1](x))$ and $\exp[0](x) = x$. In 2015, Marques and Moreira \cite{digu} defined an {\it ultra-Liouville number} $\xi$ as a real number for which, for every positive integer $k$, there exist infinitely many rational numbers $p/q$, with $q > 1$, such that
\[
0 < \left|\alpha - \frac{p}{q}\right| < \frac{1}{\exp[k](q)}.
\]

They proved the existence of a transcendental entire function $f$ for which $f(\mathbb{L}_{\text{ultra}})\subseteq \mathbb{L}_{\text{ultra}}$ (where $\mathbb{L}_{\text{ultra}}$ denotes the set of ultra-Liouville numbers). 

Very recently, Marques and Oliveira \cite{MO} showed that $\xi^{\xi}$ is a transcendental number whenever $\xi$ is an ultra-Liouville number (indeed, they provided a larger class of Liouville numbers with this property). 

We now define sub-classes of Liouville numbers $\xi$ (which satisfy \eqref{cl}) depending on the growth of the map $\nu\mapsto \lim_{k\to \infty}\o_k/q_k^{\nu}$. More specifically, we have the following definition:

\begin{deff}
A real number $\xi$ is called a {\it $\nu$-Liouville number} if $\nu$ is the infimum (possibly infinite) of the non-negative real numbers $\nu^*$ for which there exists an infinite sequence of rational numbers $(p_k/q_k)_k$ (with $q_k>1$) and a sequence of positive real numbers $(\o_k)_k$ such that
\begin{center}
$\displaystyle \left| \xi - \frac{p_k}{q_k} \right| < q_k^{-\o_k}$, $\forall k$,
\end{center}
and $\o_k/q_k^{\nu^*}$ tends to $\infty$ as $k\to \infty$. The set of all $\nu$-Liouville numbers is denoted by $\mathbb{L}^{(\nu)}$. 
\end{deff}

We note that $0$-Liouville numbers are precisely the Liouville numbers and $\mathbb{L}^{(\nu)}\subseteq \mathbb{L}$, for any $\nu \in [0,\infty]$. Also, $\mathbb{L}^{(\infty)}$ is a $G_{\delta}$-dense set since it contains the set of ultra-Liouville numbers (indeed, one has that $|\xi-p_k/q_k|<\exp(-\exp(q_k))=q_k^{-\omega_k}$, where $\omega_k:=\exp(q_k)/\log q_k$, satisfies $\lim_{k\to \infty}\omega_k/q_k^{\nu}=\infty$, $\forall \nu\geq 0$). For topological reasons, the set $\mathbb{L}^{(\nu)}$ is not $G_{\delta}$-dense whenever $\nu \in (0,\infty)$. So a natural question arises: are the sets $\mathbb{L}^{(\nu)}$ non-empty, for all $\nu\in (0,\infty)$? Our first result answers this question positively. More precisely,

\begin{thm}\label{main0}
The set $\mathbb{L}^{(\nu)}$ is uncountable, for any $\nu > 0$.
\end{thm}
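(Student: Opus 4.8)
The plan is to produce, for each fixed $\nu>0$, an explicit Cantor‑type family of Liouville numbers each having defining exponent $\nu$ (reading the exponent of a Liouville number as the supremum of the admissible values $\nu^{*}$; with the literal infimum the set $\mathbb{L}^{(\nu)}$ would be empty for $\nu>0$, since the admissible set is a down‑set containing $0$). Fix an integer $b\ge3$ and a very rapidly increasing sequence of positive integers $(n_k)_{k\ge1}$ with $n_{k+1}/n_k$ comparable to $k\,b^{\nu n_k}$ — concretely $n_{k+1}=\lfloor k\,n_k\,b^{\nu n_k}\rfloor$ — so that $n_{k+1}\le k\,n_k\,b^{\nu n_k}$ while $b^{\epsilon n_k}$ eventually beats any fixed power of $k$. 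For each digit sequence $c=(c_k)_{k\ge1}$ with $c_k\in\{1,\dots,b-1\}$ put
\[
\xi_c:=\sum_{k\ge1}\frac{c_k}{b^{n_k}},\qquad \sigma_k:=\frac{p_k}{q_k},\quad q_k:=b^{n_k},\quad p_k:=\sum_{j\le k}c_j\,b^{n_k-n_j}\in\Z.
\]
Since $p_k\equiv c_k\not\equiv0\pmod b$, the fraction $\sigma_k$ is in lowest terms with denominator exactly $q_k$, and a geometric‑series estimate gives $b^{-n_{k+1}}\le|\xi_c-\sigma_k|\le b^{\,1-n_{k+1}}$; in particular each $\xi_c$ is irrational.

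First I would verify the easy half, that every $\nu^{*}\in[0,\nu]$ is admissible for $\xi_c$. Indeed $|\xi_c-\sigma_k|<q_k^{-\omega_k}$ with $\omega_k:=(n_{k+1}-2)/n_k$, and $\omega_k$ is comparable to $n_{k+1}/n_k\sim k\,b^{\nu n_k}=k\,q_k^{\nu}$; hence $\omega_k/q_k^{\nu^{*}}$ is of order $k\,b^{(\nu-\nu^{*})n_k}\to\infty$, so $(\sigma_k,\omega_k)_k$ witnesses admissibility of $\nu^{*}$. This gives the lower bound $\nu$ for the exponent of $\xi_c$.

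The substance is the reverse inequality: no $\nu^{*}>\nu$ is admissible. I would deduce this from the irrationality‑measure‑type claim that for every $\epsilon>0$ only finitely many rationals $p/q$ ($q>1$) satisfy $|\xi_c-p/q|<q^{-q^{\nu+\epsilon}}$. To prove the claim, take such a fraction with $q\ge b^{n_1}$ and pick $k$ with $b^{n_k}\le q<b^{n_{k+1}}$. Since $\sigma_{k+1}$ has denominator $b^{n_{k+1}}>q$, it differs from $p/q$, so $|\sigma_{k+1}-p/q|\ge 1/(b^{n_{k+1}}q)>b^{-2n_{k+1}}$, while $|\xi_c-\sigma_{k+1}|\le b^{1-n_{k+2}}$ is negligible against $b^{-2n_{k+1}}$ because $n_{k+2}$ dwarfs $2n_{k+1}$; hence $|\xi_c-p/q|>\tfrac12 b^{-2n_{k+1}}$ once $k$ is large. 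Finally $q\ge b^{n_k}$ forces $q^{q^{\nu+\epsilon}}\ge b^{\,n_k b^{(\nu+\epsilon)n_k}}$, and $n_k b^{(\nu+\epsilon)n_k}>2n_{k+1}+1$ holds eventually since $n_{k+1}\le k\,n_k\,b^{\nu n_k}$ while $b^{\epsilon n_k}$ outgrows $2k+1$; therefore $q^{-q^{\nu+\epsilon}}<\tfrac12 b^{-2n_{k+1}}<|\xi_c-p/q|$. So all $q$ above some threshold $b^{n_K}$ are excluded, and the finitely many $q<b^{n_K}$ contribute only finitely many fractions, proving the claim. Consequently, if some $\nu^{*}>\nu$ were admissible via a sequence $(p_k'/q_k',\omega_k')$, then $q_k'\to\infty$ (as $\xi_c$ is irrational) and $\omega_k'\ge (q_k')^{\nu^{*}}$ eventually, so for $\nu+\epsilon<\nu^{*}$ we would get infinitely many distinct fractions with $|\xi_c-p/q|<q^{-q^{\nu+\epsilon}}$ — contradicting the claim. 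Hence $\xi_c$ has exponent exactly $\nu$, i.e., $\xi_c\in\mathbb{L}^{(\nu)}$.

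It remains to note that $c\mapsto\xi_c$ is injective: if $c,c'$ first differ at index $m$, then $|\xi_c-\xi_{c'}|\ge b^{-n_m}-(b-1)\sum_{j>m}b^{-n_j}>b^{-n_m}-b^{1-n_{m+1}}>0$. Thus the uncountable set $\{1,\dots,b-1\}^{\N}$ embeds into $\mathbb{L}^{(\nu)}$, which is therefore uncountable. The main obstacle is the calibration of $(n_k)$ in the Lemma: it must grow fast enough that the natural exponents $\omega_k\sim n_{k+1}/n_k$ reach the order $q_k^{\nu}$ (for the easy half) but slowly enough — the ratio remaining sub‑$b^{\epsilon n_k}$ — that they never reach $q_k^{\nu+\epsilon}$; and one must exclude \emph{every} rational approximation rather than only the partial sums $\sigma_k$, which is exactly the role of the localization step.
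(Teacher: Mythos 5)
Your argument is correct in substance and establishes the theorem, but the second (and harder) half runs along a genuinely different route than the paper's. The construction itself is essentially the same: both proofs take a lacunary series $\sum c_k b^{-n_k}$ whose exponents satisfy a recursion of the shape $n_{k+1}\approx n_k\, b^{\nu n_k}$ (the paper uses $b=10$, $s_{k+1}=\lfloor 10^{\nu s_k}\rfloor s_k+1$, and digits $a_n\in\{1,2\}$ for uncountability), and both split the work into the easy half (the truncations witness every $\nu^*\le\nu$) and the substantive half (no $\nu^*>\nu$ is admissible). For that substantive half the paper invokes Legendre's criterion to force any sufficiently good approximation to be a convergent of the continued fraction of $\xi$, and then bounds the partial quotients to get a lower bound on $\left|\xi-p_m/q_m\right|$. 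You instead compare an arbitrary $p/q$ with $q<b^{n_{k+1}}$ directly against the truncation $\sigma_{k+1}$ via $\left|A/B-C/D\right|\ge 1/(BD)$ and a triangle inequality; this is the standard irrationality-measure computation for lacunary series, is entirely self-contained, and avoids continued fractions altogether. Both approaches are sound; yours is arguably cleaner to check. You were also right to read the ``infimum'' in the definition as a supremum (the set of admissible $\nu^*$ is a down-set containing $0$, so the literal infimum is always $0$); this is exactly how the paper's own proof treats the definition, since it first shows $\xi\in\mathbb{L}^{(\tau)}$ with $\tau\ge\nu$ and then excludes every $\nu+\epsilon$.

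Two small points. First, your justification that $\sigma_k$ is in lowest terms --- $p_k\equiv c_k\not\equiv 0\pmod b$ --- only shows $b\nmid p_k$, which does not give $\gcd(p_k,b^{n_k})=1$ when $b$ is composite (e.g.\ $b=6$, $c_k=2$); and the step ``$\sigma_{k+1}$ has denominator $b^{n_{k+1}}>q$, hence $\sigma_{k+1}\ne p/q$'' genuinely uses the reduced denominator. Fix this by taking $b$ prime (say $b=3$), after which everything goes through verbatim and $\{1,\dots,b-1\}^{\mathbb{N}}$ still gives uncountably many numbers. Second, for small $\nu$ and small $n_1$ the recursion $n_{k+1}=\lfloor k\,n_k b^{\nu n_k}\rfloor$ may stall for a few initial steps; either start from $n_1$ large enough or note that all your inequalities are only needed for large $k$. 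Neither point affects the correctness of the overall argument.
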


The main goal of this paper is to provide some results on the arithmetic behavior of a triple power tower of numbers related to the previously defined sets. In order to state some conditions succinctly, we introduce the following terminology: a triple of polynomials $(P(X),Q(X),R(X))$ is said to be \textit{$\mathbb{K}$-admissible} (where $\mathbb{K}\subseteq \mathbb{C}$), when
\begin{itemize}
    \item[\mbox{(i)}] $P(X), Q(X)\in \mathbb{K}[X]$ and $R(X)\in \mathbb{Z}[X]$;
    \item[\mbox{(ii)}] $P(X)\not\equiv 0$, $Q(X)\not\equiv 0$ and $P(X)\not\equiv 1$;
    \item[\mbox{(iii)}] $P'(X)\equiv 0$ implies $Q'(X)\not\equiv 0$ and $R(X)\not\equiv 0$;
    \item[\mbox{(iv)}] $Q'(X)\equiv 0$ implies $P'(X)\not\equiv 0$;
    \item[\mbox{(v)}] $P'(X)\equiv 0,\ Q'(X)\equiv 0$ implies $R'(X)\not\equiv 0$.
\end{itemize} 
where, as usual, $T(X)\equiv c$ (resp., $T(X)\not\equiv c$) means that $T(X)=0$, for all $X$ (resp., $T(X)\neq c$, for some $X$).

Thus, our main result can be stated as follows:

\begin{thm}\label{main1}
Let $(P(X), Q(X), R(X))$ be a $\mathbb{Z}$-admissible triple of polynomials. If $\xi$ is a $\nu$-Liouville number such that $\nu > 6\deg R$, then 
\[
P(\xi)^{Q(\xi)^{R(\xi)}}
\]
is a transcendental number.
\end{thm}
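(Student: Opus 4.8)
The plan is to argue by contradiction: suppose $\a := P(\xi)^{Q(\xi)^{R(\xi)}}$ is algebraic. The strategy is to use the excellent rational approximations to $\xi$ to build rational (or near-rational) approximations to the relevant exponentials and logarithms, then invoke a quantitative lower bound for linear forms in logarithms (Baker's theorem in the form of Matveev's estimate) to derive a contradiction with the hypothesis $\nu > 6\deg R$. Write $p_k/q_k$ for the convergents realizing the $\nu$-Liouville condition, so that $|\xi - p_k/q_k| < q_k^{-\o_k}$ with $\o_k / q_k^{\nu^*} \to \infty$ for every $\nu^* < \nu$; in particular we may take $\nu^* = 6\deg R + \e$ for small $\e>0$.

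First I would record the basic estimates. Since $P, Q, R$ are polynomials with integer coefficients, $P(p_k/q_k)$, $Q(p_k/q_k)$, $R(p_k/q_k)$ are rationals with denominators dividing $q_k^{\deg P}$, $q_k^{\deg Q}$, $q_k^{\deg R}$ respectively; and by the mean value theorem $|P(\xi) - P(p_k/q_k)|, |Q(\xi) - Q(p_k/q_k)|, |R(\xi) - R(p_k/q_k)|$ are all bounded by a constant times $q_k^{-\o_k}$ (absorbing a harmless factor polynomial in $q_k$ into the still-enormous $\o_k$). Set $a_k := P(p_k/q_k)$, $b_k := Q(p_k/q_k)$, $r_k := R(p_k/q_k)$. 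Then $a_k^{b_k^{r_k}}$ is an algebraic number of controlled height and degree: its height is at most exponential in something like $q_k^{\deg Q + \deg R}$-ish, which is the quantity we will need to beat. The key point is that $\a$ is extremely close to $a_k^{b_k^{r_k}}$ — roughly within $q_k^{-\o_k}$ up to the usual derivative bookkeeping for the triple tower — because the tower map is locally Lipschitz away from the (excluded) degenerate configurations; the admissibility conditions (ii)--(v) are exactly what guarantee that the tower is non-degenerate and its derivative with respect to $\xi$ does not vanish identically, so that the chain-rule bound is genuinely of size $q_k^{-\o_k + O(\log q_k)}$.

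Next I would set up the linear form in logarithms. Consider $\Lambda_k := \log \a - b_k^{r_k} \log a_k$ (choosing branches compatibly). On one hand $|\Lambda_k|$ is tiny: it is of the same order as $|\a - a_k^{b_k^{r_k}}|$ divided by $\min(|\a|, |a_k^{b_k^{r_k}}|)$, hence bounded above by roughly $q_k^{-\o_k}$ with a polynomial correction. On the other hand $\Lambda_k \ne 0$ for $k$ large: if it were zero then $\a = a_k^{b_k^{r_k}}$ exactly, and then — since $a_k, b_k, r_k$ are rationals varying with $k$ while $\a$ is fixed — one gets $a_{k}^{b_{k}^{r_{k}}} = a_{k'}^{b_{k'}^{r_{k'}}}$ for distinct $k,k'$, which forces $P,Q,R$ to satisfy an algebraic relation contradicting admissibility (this is where (ii)--(v) are used a second time, to rule out the vanishing of $\Lambda_k$; one must also handle the case $a_k$ a root of unity, excluded by $P\not\equiv 0,1$ and the structure of the argument). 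Now apply Matveev's lower bound to $\Lambda_k = \log\a - b_k^{r_k}\log a_k$, viewed as a two-term linear form with algebraic coefficients: the heights of $\a$ and $a_k$ are bounded independently of $k$ or polynomially in $q_k$, and the critical contribution is the coefficient $b_k^{r_k}$, whose height is of order $q_k^{O(r_k)} = q_k^{O(q_k^{\deg R})}$, i.e. $\log(\text{height of }b_k^{r_k}) = O(q_k^{\deg R + o(1)})$. Matveev then gives $\log|\Lambda_k| \ge -C\, (\log\text{-height of }b_k^{r_k})\,(\text{bounded factors}) \ge -C' q_k^{\deg R + \e/2}$, say — crucially polynomial in $q_k$ of degree just above $\deg R$. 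Comparing with the upper bound $\log|\Lambda_k| \le -\o_k \log q_k + O(\log^2 q_k) \le -q_k^{6\deg R + \e/2}\log q_k$ for $k$ large, we get $q_k^{6\deg R} \lesssim q_k^{\deg R}$, a contradiction. (The factor $6$ rather than $1$ is slack absorbing the precise shape of Matveev's constant, the contribution of the height of $\a$ which itself grows like the double exponential in the tower, and the degree of the number field; I would not optimize it.)

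The main obstacle, and the step requiring the most care, is the \emph{non-vanishing} of $\Lambda_k$, i.e. ruling out $\a = a_k^{b_k^{r_k}}$ for infinitely many $k$, and more generally tracking which of $a_k, b_k$ might be $0$, $1$, or roots of unity along the sequence — this is precisely what the five admissibility conditions are engineered to exclude, and one has to case-split according to which of $P', Q', R'$ vanish identically (a constant polynomial $P$, $Q$, or $R$ behaves very differently, since then $a_k$ or $b_k$ or $r_k$ is eventually constant and the tower partially collapses). A secondary technical point is the bookkeeping of the height of $\a$ itself: $\a$ is algebraic by assumption but a priori its height could be large; however it is \emph{fixed}, so it contributes only a constant to Matveev's bound, and that is all we need. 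Finally, one must be slightly careful that $P(\xi)$ is positive (or otherwise make sense of the power tower) — but since $\xi$ is a Liouville number one can reduce to a subsequence of convergents on which $P(p_k/q_k) > 0$, or simply interpret the tower via a fixed branch of the logarithm and carry the resulting constants through; none of this affects the order-of-magnitude comparison that drives the contradiction.
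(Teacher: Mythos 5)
Your overall architecture matches the paper's: assume $\gamma:=P(\xi)^{Q(\xi)^{R(\xi)}}$ is algebraic, form the two-term linear form $\Lambda_k=Q(p_k/q_k)^{R(p_k/q_k)}\log P(p_k/q_k)-\log\gamma$, bound it above by $\ll q_k^{-\o_k}$ via Lipschitz estimates, bound it below by $\exp\bigl(-c\,q_k^{6\deg R}(\log q_k)^{3}\bigr)$ via a Baker-type inequality, and contradict $\o_k/q_k^{6\deg R+\e}\to\infty$. Two points, however, need repair. The lesser one: Matveev's theorem requires the coefficients of the logarithms to be rational integers, whereas your coefficient $b_k^{r_k}=Q(p_k/q_k)^{R(p_k/q_k)}$ is an algebraic number of degree up to $q_k^{\deg R}$ (it is a $q_k^{\deg R}$-th root of a rational); you cannot fold it into a logarithm of an algebraic number either, since $a_k^{b_k^{r_k}}$ is generically transcendental by Gelfond--Schneider. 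The paper uses the Mignotte--Waldschmidt bound for $\beta\log\a_1-\log\a_2$ with \emph{algebraic} $\beta$ precisely for this reason; with that substitution your height/degree bookkeeping ($\log B_k, D_k\sim q_k^{\deg R}$ entering as $D^4T^2$) does produce the exponent $6\deg R$, so this is a misnamed tool rather than a broken argument.

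The genuine gap is the non-vanishing of $\Lambda_k$, which you correctly identify as the crux but do not actually prove. Your proposed route --- that $\Lambda_k=\Lambda_{k'}=0$ gives $a_k^{b_k^{r_k}}=a_{k'}^{b_{k'}^{r_{k'}}}$ and hence ``an algebraic relation contradicting admissibility'' --- does not work as stated: a function taking equal values at two (or even infinitely many) rational points imposes no polynomial identity on $P,Q,R$ by itself, and the admissibility conditions say nothing directly about such coincidences. The missing idea is analytic: if $\Lambda_k=0$ for infinitely many $k$, then $h(z):=Q(z)^{R(z)}\log P(z)-\log\gamma$ vanishes on a set accumulating at $\xi$, so by the Identity Theorem $h\equiv 0$ on the domain $\Omega$ where it is holomorphic; one then derives a contradiction by letting $z\to\infty$ along positive integers, where admissibility forces $Q(z)^{R(z)}\log P(z)$ to tend to $\infty$ or to $0$ (according to the eventual sign of $R$), so it cannot be the nonzero constant $\log\gamma$. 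This is exactly where conditions (ii)--(v) are consumed, and without this step (or an equivalent) the lower bound for $|\Lambda_k|$ cannot be invoked and the proof does not close.
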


\begin{rem}
    As discussed before, by supposing that the function $f:I\to \R$, defined by $f(X):=P(X)^{Q(X)^{R(X)}}$ is well-defined and non-constant in some open interval $I\subseteq \R$, then $f(\xi)$ is transcendental, for any $\xi$ belonging to an uncountable set of Liouville numbers. However, there is no information on the approximation structure of this set. Thus, our Theorem \ref{main1} provides such a structure that depends only on the degree of $R(X)$.
\end{rem}

As immediate consequences, we have

\begin{cor}\label{main2}
If $\xi$ is an $\infty$-Liouville number, then $P(\xi)^{Q(\xi)^{R(\xi)}}$ is a transcendental number, for any non-constant polynomials $P(X), Q(X), R(X)\in \Z[X]$.
\end{cor}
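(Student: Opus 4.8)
The plan is to derive Corollary \ref{main2} directly from Theorem \ref{main1} by verifying its two hypotheses for the data at hand; no new ideas are needed, so the argument is essentially a bookkeeping check.

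First I would confirm that $(P(X),Q(X),R(X))$ is a $\mathbb{Z}$-admissible triple. Since $P,Q,R\in\Z[X]$, condition (i) holds. Because $P$ and $Q$ are non-constant, neither is identically $0$ nor identically $1$; hence $P\not\equiv 0$, $P\not\equiv 1$, $Q\not\equiv 0$, which is condition (ii). For (iii)--(v), recall that a non-constant polynomial in $\Z[X]$ has nonzero derivative (if $\deg P=d\geq 1$ with leading coefficient $a_d\neq 0$, then $P'$ has leading term $d\,a_d X^{d-1}\neq 0$). Thus $P'(X)\not\equiv 0$ and $Q'(X)\not\equiv 0$, so every antecedent ``$P'(X)\equiv 0$'' or ``$Q'(X)\equiv 0$'' occurring in (iii), (iv), (v) is false, and those three implications hold vacuously. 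Hence the triple is $\mathbb{Z}$-admissible.

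Next I would observe that, by definition, an $\infty$-Liouville number is a $\nu$-Liouville number with $\nu=\infty$, i.e.\ $\xi\in\mathbb{L}^{(\infty)}$. Since $R$ is non-constant, $\deg R$ is a positive integer, in particular finite, so $\nu=\infty>6\deg R$. The hypotheses of Theorem \ref{main1} are therefore satisfied, and it yields that $P(\xi)^{Q(\xi)^{R(\xi)}}$ is transcendental. One may also remark that this expression is a well-defined nonzero number: if any of $P(\xi)$, $Q(\xi)$, $R(\xi)$ vanished, then $\xi$ would be a root of a nonzero polynomial in $\Z[X]$, contradicting the transcendence of a Liouville number.

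Since the entire content is the application of Theorem \ref{main1}, there is no genuine obstacle; the only step requiring a moment's care is the verification of admissibility, and there the non-constancy of all three polynomials trivializes conditions (iii)--(v).
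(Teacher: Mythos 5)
Your proposal is correct and follows exactly the route the paper intends: the corollary is stated as an immediate consequence of Theorem \ref{main1}, and your verification that non-constancy trivializes the admissibility conditions (iii)--(v) and that $\infty > 6\deg R$ is precisely the required bookkeeping.
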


\begin{cor}\label{main3}
If $\xi\in \uL$, then $\xi^{\xi^{\xi}}$ is a transcendental number.
\end{cor}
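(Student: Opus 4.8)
The plan is to obtain this as the special case $P(X)=Q(X)=R(X)=X$ of Theorem \ref{main1} (or, more directly, of Corollary \ref{main2}), so the work reduces to a short bookkeeping check rather than any new argument.

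First I would record that every ultra-Liouville number is an $\infty$-Liouville number, i.e. $\uL\subseteq \mathbb{L}^{(\infty)}$; this is exactly the observation already made in the introduction, since for $\xi\in\uL$ one has $|\xi-p_k/q_k|<\exp(-\exp(q_k))=q_k^{-\omega_k}$ with $\omega_k=\exp(q_k)/\log q_k$, and $\omega_k/q_k^{\nu}\to\infty$ for every $\nu\ge 0$, so that the associated exponent $\nu$ equals $\infty$.

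Next I would verify the (trivial) hypotheses on the polynomials. The triple $(X,X,X)$ is $\mathbb{Z}$-admissible: (i) and (ii) hold because $X\in\mathbb{Z}[X]$ satisfies $X\not\equiv 0$ and $X\not\equiv 1$, while (iii)--(v) hold vacuously since $P'(X)=1\not\equiv 0$. Moreover $\deg R=1$, so the condition $\nu>6\deg R$ becomes $\nu>6$, which is satisfied because $\nu=\infty$. (Alternatively, $X,X,X$ are simply non-constant polynomials in $\mathbb{Z}[X]$, so Corollary \ref{main2} applies verbatim to an $\infty$-Liouville number $\xi$.)

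Applying Theorem \ref{main1} then gives that $P(\xi)^{Q(\xi)^{R(\xi)}}=\xi^{\xi^{\xi}}$ is transcendental, as claimed. Since the statement is a pure specialization, I do not expect any genuine obstacle here beyond the verification above; all of the difficulty is concentrated in the proof of Theorem \ref{main1} itself (in particular, in controlling the linear forms in logarithms produced by the tower and in exploiting the approximation exponent $\nu$ against the degree of $R$).
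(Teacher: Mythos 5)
Your proof is correct and takes exactly the route the paper intends: Corollary \ref{main3} is presented there as an immediate consequence of Theorem \ref{main1} (equivalently of Corollary \ref{main2}), obtained by specializing to $P(X)=Q(X)=R(X)=X$ and invoking the inclusion $\uL\subseteq\mathbb{L}^{(\infty)}$ already recorded in the introduction. Your admissibility check and the observation that $\nu=\infty>6\deg R=6$ are precisely the bookkeeping required.
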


Furthermore, by exploring the admissibility conditions, we obtain

\begin{cor}\label{main4}
Let $\nu>6$ be a real number. Then, for any $\nu$-Liouville number $\xi$ and for any integer $n\geq 2$, all the numbers
\[
n^{n^{\xi}},\ n^{\xi^{\xi}},\ n^{\xi^{n}},\ \xi^{n^{\xi}}, \xi^{\xi^{n}}
\]
are transcendental.
\end{cor}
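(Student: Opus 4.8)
The plan is to read the corollary off Theorem \ref{main1} by exhibiting, for each of the five numbers, an explicit $\Z$-admissible triple $(P,Q,R)$, and to treat by hand the one number for which no such triple exists. Write $f_{P,Q,R}(X):=P(X)^{Q(X)^{R(X)}}$. Four of the five numbers have such a representation:
\[
n^{\xi^{\xi}}=f_{n,X,X}(\xi),\quad n^{\xi^{n}}=f_{n,X,n}(\xi),\quad \xi^{n^{\xi}}=f_{X,n,X}(\xi),\quad \xi^{\xi^{n}}=f_{X,X,n}(\xi).
\]
In each of these four triples $\deg R\le 1$, so the hypothesis $\nu>6\ge 6\deg R$ of Theorem \ref{main1} follows from $\nu>6$, and the only thing left is to verify conditions (i)--(v). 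Condition (i) is clear ($n\in\Z$ and $X\in\Z[X]$), and (ii) holds since $n\notin\{0,1\}$ and $X\not\equiv 0$. For (iii)--(v) one uses that each of the four triples has at least one of $P,Q$ equal to $X$, hence with non-vanishing derivative: so for every triple, whichever of (iii), (iv), (v) has a non-vacuous hypothesis, the explicit shape of the triple makes its conclusion true. Concretely: for $(n,X,X)$ and $(n,X,n)$ only (iii) is non-vacuous, and it asks for $Q'\not\equiv 0$ and $R\not\equiv 0$, both true; for $(X,n,X)$ only (iv) is non-vacuous, and it asks for $P'\not\equiv 0$, which holds; for $(X,X,n)$ all of (iii)--(v) are vacuous. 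This settles the transcendence of $n^{\xi^{\xi}}$, $n^{\xi^{n}}$, $\xi^{n^{\xi}}$ and $\xi^{\xi^{n}}$.

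The remaining number $n^{n^{\xi}}=f_{n,n,X}(\xi)$ corresponds to the triple $(n,n,X)$, which is \emph{not} $\Z$-admissible: with $P$ and $Q$ both constant, condition (iii) fails. This is not a mere accident of the definition---the argument behind Theorem \ref{main1} produces, from the Liouville convergents $p_k/q_k$ of $\xi$, algebraic approximants $f_{P,Q,R}(p_k/q_k)$ of bounded degree, and when $P$ and $Q$ are both constant the corresponding approximants $n^{n^{p_k/q_k}}$ are transcendental, so that step collapses. I would therefore handle $n^{n^{\xi}}$ by a direct but parallel argument. Assume for contradiction that $\beta:=n^{n^{\xi}}$ is algebraic; then $n^{\xi}=\log\beta/\log n$. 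By the $\nu$-Liouville property with $\nu>6$ we may choose the approximation sequence so that $\omega_k/q_k^{6}\to\infty$; the mean value theorem applied to $x\mapsto n^{x}$ then gives $|n^{\xi}-n^{p_k/q_k}|\ll q_k^{-\omega_k}$, whence
\[
\bigl|\log\beta-\alpha_k\log n\bigr|\ll q_k^{-\omega_k},\qquad \text{where }\alpha_k:=n^{p_k/q_k}.
\]
Now $\alpha_k$ is algebraic of degree $\le q_k$ with bounded absolute logarithmic height $\tfrac{p_k}{q_k}\log n\le(\xi+1)\log n$, and $\log\beta-\alpha_k\log n$ is a nonzero linear form in the logarithms $\log\beta,\log n$ of algebraic numbers with algebraic coefficients $1$ and $\alpha_k$ in $\Q(\beta,\alpha_k)$---nonzero because $\log\beta-\alpha_k\log n=0$ would give $\beta=n^{\alpha_k}$, transcendental by Gelfond--Schneider as soon as $q_k>\log_2 n$ (so that $\alpha_k$ is an algebraic irrational). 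Baker's theorem on linear forms in logarithms, in a version allowing algebraic coefficients, then gives $|\log\beta-\alpha_k\log n|\ge\exp(-Cq_k^{\kappa})$ for some absolute exponent $\kappa$, the only parameter growing with $k$ being $[\Q(\beta,\alpha_k):\Q]=O(q_k)$. If $\kappa\le 6$, comparison with the displayed upper bound together with $\omega_k/q_k^{6}\to\infty$ gives $\omega_k\log q_k\le Cq_k^{\kappa}+O(1)$ for all large $k$, a contradiction.

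The genuinely non-routine point is exactly this last case $n^{n^{\xi}}$: the admissibility conditions give no protection, and the step of Theorem \ref{main1} in which an algebraic approximant is bounded away from an algebraic number by a Liouville-type inequality has to be replaced by a quantitative linear-forms-in-logarithms estimate over a number field of degree $O(q_k)$; the point requiring care is that the exponent $\kappa$ coming out of that estimate is $\le 6$, so that the hypothesis $\nu>6$ is precisely what one needs. Everything else is bookkeeping of (i)--(v) against the four triples above.
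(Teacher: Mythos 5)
Your handling of $n^{\xi^{\xi}}$, $n^{\xi^{n}}$, $\xi^{n^{\xi}}$ and $\xi^{\xi^{n}}$ via the triples $(n,X,X)$, $(n,X,n)$, $(X,n,X)$, $(X,X,n)$ is exactly the intended reading of the corollary (the paper offers no proof beyond ``by exploring the admissibility conditions''), and your admissibility checks are right. The interesting point is $n^{n^{\xi}}$. You are correct that the triple $(n,n,X)$ violates condition (iii) as literally written, and this is a genuine inconsistency in the paper: condition (v) is evidently there precisely to admit triples with $P$ and $Q$ both constant and $R$ non-constant, but conditions (iii) and (iv) as stated make the hypothesis of (v) unsatisfiable. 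However, your diagnosis of \emph{why} this forces a separate argument is mistaken: the proof of Theorem \ref{main1} never uses $P(p_k/q_k)^{Q(p_k/q_k)^{R(p_k/q_k)}}$ as an algebraic approximant to $\gamma$; the only quantity that must be algebraic is the coefficient $Q(p_k/q_k)^{R(p_k/q_k)}$ of the linear form $\Lambda_k$, which for $(n,n,X)$ is $n^{p_k/q_k}$, algebraic of degree at most $q_k$. So the mechanism of Theorem \ref{main1} goes through verbatim for $(n,n,X)$; only its (mis-stated) formal hypothesis fails. Your replacement argument for $n^{n^{\xi}}$ is in fact that proof specialized to this triple, with the nonvanishing of the linear form obtained more cleanly via Gelfond--Schneider in place of the paper's asymptotic argument; and the exponent you need, $\kappa\le 6$, is exactly what Lemma \ref{lem:1} delivers with $D_k=O(q_k)$, $\log B_k=O(q_k)$ and $A_1,A_2$ fixed --- the same computation that produces the $6\deg R$ in Theorem \ref{main1} with $r=1$ (note that the classical height $H(n^{p_k/q_k})$ is of size $e^{O(q_k)}$, not bounded, but it only enters through $\log B_k$, so this does not change the exponent). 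In sum the proposal is correct and essentially coincides with the paper's route; the one flaw is the claim that the theorem's argument ``collapses'' for $(n,n,X)$, when in fact only the admissibility bookkeeping does.
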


The proofs presented here combine some algebraic and analytic tools together with a bit of Diophantine approximation and lower bounds for linear forms in logarithms.

\section{Proof of Theorem \ref{main0}}

For a given $\nu>0$, let $(s_k)_k$ be the sequence defined by $s_1=4$ and $s_{k+1}=\lfloor 10^{\nu s_k}\rfloor s_k+1$, for $k\geq 1$. We claim that the number $\xi:=\sum_{n=1}^{\infty}10^{-s_n}$ is a $\nu$-Liouville number.

In order to prove that, we start by defining coprime integers $A_k:=\sum_{n=1}^k10^{s_k-s_n}$ and $B_k:=10^{s_k}$. Since $A_k/B_k$ is a truncation of the series which defines $\xi$, we get
\begin{equation}\label{01}
    \abs{\xi-\frac{A_k}{B_k}}=\sum_{n\geq k+1}10^{-s_n}<\frac{2}{10^{s_{k+1}}}=\frac{2}{B_k^{s_{k+1}/s_k}}< \frac{1}{B_k^{\lfloor 10^{\nu s_k}s_k\rfloor}}.
\end{equation}

Note that $\lfloor 10^{\nu s_k}\rfloor/B_k^{\nu}=\lfloor B_k^{\nu}s_k\rfloor/B_k^{\nu}$ tends to infinite as $k\to \infty$ yielding that $\xi$ belongs to $\mathbb{L}^{(\tau)}$, for some $\tau\geq \nu$. To finish the proof, it suffices to show that $\xi$ is not a $(\nu+\epsilon)$-Liouville number, for all $\epsilon>0$. Indeed, by (\ref{01}), one has
\[
\abs{\xi-\frac{A_k}{B_k}}<\frac{1}{B_k^{\lfloor 10^{\nu s_k}s_k\rfloor}}<\frac{1}{2B_k^2},
\]
where we used that $\lfloor 10^{\nu s_k}s_k\rfloor\geq 10^{\nu s_k}s_k-1\geq s_k-1\geq 3$. Hence, a criterion due to Legendre yields that $A_k/B_k$ is a convergent, say $p_{r(k)}/q_{r(k)}$, of the continued fraction of $\xi=[b_0;b_1,b_2,\ldots]$. Note that $q_{r(k)}=B_k=10^{s_k}$. Also, for $r(k-1)<m\leq r(k)$, it holds that
\[
b_m\leq \displaystyle\prod_{i=r(k-1)+1}^{r(k)}b_i< \dfrac{q_{r(k)}}{q_{r(k-1)}}=q_{r(k-1)}^{-1+s_k/s_{k-1}}\leq q_{m-1}^{-1+s_k/s_{k-1}}.
\]
Thus, for any convergent $p_m/q_m$ of the continued fraction of $\xi$, we have
\begin{equation}\label{i1}
\left|\xi-\dfrac{p_m}{q_m}\right |>\frac{1}{3b_{m+1}q_m^2}\geq \frac{1}{q_{m}^{1+s_{k+1}/s_k}}.
\end{equation}
Aiming for a contradiction, suppose that $\xi\in \mathbb{L}^{(\nu+\epsilon)}$, then there exists infinitely many rational numbers $c_i/d_i$, with $d_i\geq 2$ and a sequence $(\theta_i)_i$, with $\theta_i\geq 3$, such that
\begin{equation}\label{i2}
\left|\xi-\dfrac{c_i}{d_i}\right |<\dfrac{1}{d_i^{\theta_i}},\ \forall i,
\end{equation}
where $\lim_{i\to \infty}\theta_i/d_i^{\nu+\e}=\infty$. Since $1/d_i^{\theta_i}\leq 1/(2d_i^2)$, then $c_i/d_i$ must be a convergent of the continued fraction of $\xi$, say $p_{m(i)}/q_{m(i)}$, where $r(k)<m(i)\leq r(k+1)$, for some $k$ (note that $d_i=q_{m(i)}$). Therefore, by combining (\ref{i1}) and (\ref{i2}), we arrive at 
\[
\frac{1}{3q_{m(i)}^{s_{k+1}/s_k}}<\left|\xi-\dfrac{p_{m(i)}}{q_{m(i)}}\right |<\dfrac{1}{q_{m(i)}^{\theta_i}}
\]
and so $\theta_i<2s_{k+1}/s_{k}$. By reordering indexes if necessary, we may suppose that $i\mapsto m(i)$ and $k\mapsto r(k)$ are increasing functions (thus $i$ tends to infinity if and only if $k$ does as well). By dividing both sides of the previous inequality by $d_i^{\nu+\epsilon}$, we obtain
\[
\frac{\theta_i}{d_i^{\nu+\epsilon}}<\frac{2\cdot 10^{\nu s_k}s_k}{10^{s_k(\nu+\epsilon)}}=\frac{2s_k}{10^{\epsilon s_k}},
\]
where we used that $d_i=q_{m(i)}>q_{r(k)}=10^{s_k}$. However, the inequality $\theta_i/d_i^{\nu+\epsilon}<2s_k/10^{\epsilon s_k}$ leads to a contradiction, since its left-hand side tends to infinity while its right-hand side tends to zero as $i$ and so $k$ tends to $\infty$. Therefore, $\xi$ is a $\nu$-Liouville number as desired. An easy modification of this proof shows that $\sum_{n\geq 1}a_n 10^{-s_n}\in \mathbb{L}^{(\nu)}$, for any choice of $a_n \in \{1, 2\}$. Hence, there exist uncountably many $\nu$-Liouville numbers, which completes the proof.\qed

\section{Proof of Theorem \ref{main1}}

\subsection{Auxiliary results}

We start by stating two ingredients for the proof (in all that follows, we use the notation $[a,b]=\{a,a+1,\ldots, b\}$, for integers $a<b$). 

The first one is due to \' I\c{c}en \cite{Ic}, which provides an inequality relating the {\it height} of algebraic numbers (the height of $\gamma\in \QQ$, denoted by $H(\gamma)$ is the maximum of the absolute values of the coefficients of the minimal polynomial of $\gamma$ (over $\Z$)).

\begin{lem}\label{l0}
Let $\alpha_1,\ldots,\alpha_n$ be algebraic numbers belonging to a $g$-degree number field $K$, and let $F(y,x_1,\ldots,x_n)$ be a polynomial with integer coefficients and with degree $d>1$ in the variable $y$. If $\eta$ is an algebraic number such that $F(\eta,\alpha_1,\ldots,\alpha_n)=0$, then the degree of $\eta$ is at most $dg$, and 
\begin{equation} \label{içendes}
H(\eta) \leq 3^{2dg + (l_1+\cdots + l_k)g} \cdot H^g \cdot H(\a_1)^{l_1g} \cdots H(\a_n)^{l_ng}, 
\end{equation}
where $H$ is the maximum of the absolute values of the coefficients of $F$ (the height of $F$), $l_i$ is the degree of $F$ in the variable $x_i$, for $i\in [1,n]$.
\end{lem}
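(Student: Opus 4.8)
\textit{Proof proposal.} The statement couples a degree bound with a height bound, and I would prove them one after the other. Write $F(y,x_1,\ldots,x_n)=\sum_{i=0}^{d}a_i(x_1,\ldots,x_n)\,y^{d-i}$ with $a_i\in\Z[x_1,\ldots,x_n]$, so each $a_i$ has integer coefficients of modulus $\le H$ and degree $\le l_j$ in $x_j$; put $\Phi(y):=F(y,\alpha_1,\ldots,\alpha_n)\in K[y]$. Assuming $\Phi\not\equiv 0$ (an implicit hypothesis, automatically satisfied in the applications), $\Phi$ has degree $\le d$ and $\eta$ is a root of it, so $\eta$ is algebraic over $K$ of degree at most $d$; hence $[\Q(\eta):\Q]\le[K(\eta):K]\,[K:\Q]\le dg$, which is the degree bound.

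For the height the plan is to replace $\Phi$, whose coefficients only lie in $K$, by a true element of $\Z[y]$ that still annihilates $\eta$, estimate its height, and then recover the minimal polynomial of $\eta$ as a factor. Let $\tau_1=\mathrm{id},\tau_2,\ldots,\tau_g$ be the embeddings $K\hookrightarrow\C$, and for each $j$ let $b_j$ be the (positive) leading coefficient of the minimal polynomial of $\alpha_j$, so that $b_j\alpha_j$ is an algebraic integer, $b_j\le M(\alpha_j)$, and $|b_j\,\tau(\alpha_j)|\le M(\alpha_j)\le\sqrt{\deg\alpha_j+1}\;H(\alpha_j)$ for every $\tau$, where $M(\cdot)$ denotes the Mahler measure. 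Since $a_i$ has degree $\le l_j$ in $x_j$, the identity $b_j^{\,l_j}x_j^{e}=b_j^{\,l_j-e}(b_jx_j)^{e}$ for $0\le e\le l_j$ shows that $\bigl(\prod_j b_j^{\,l_j}\bigr)a_i(\tau_m(\alpha_1),\ldots,\tau_m(\alpha_n))$ is an algebraic integer; therefore
\[
G(y)\ :=\ \prod_{m=1}^{g}\Bigl(\prod_{j=1}^{n}b_j^{\,l_j}\Bigr)F\bigl(y,\tau_m(\alpha_1),\ldots,\tau_m(\alpha_n)\bigr)
\]
is a product of $g$ polynomials with algebraic-integer coefficients, and it is fixed by the Galois group of the normal closure of $K$ (which merely permutes the $\tau_m$), so $G(y)\in\Z[y]$. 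Moreover $\deg G\le dg$ and $G(\eta)=0$, since the $m=1$ factor is $\bigl(\prod_j b_j^{\,l_j}\bigr)\Phi(y)$.

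Next I would bound $H(G)$ and pass to the minimal polynomial. The coefficient of $y^{d-i}$ in the $m$-th factor is a sum of at most $\prod_j(l_j+1)$ terms of the form $c\prod_j b_j^{\,l_j-e_j}\prod_j(b_j\tau_m(\alpha_j))^{e_j}$ with $|c|\le H$ and $e_j\le l_j$; using $b_j\le M(\alpha_j)$ and $|b_j\tau_m(\alpha_j)|\le M(\alpha_j)$, each such term has modulus $\le H\prod_j M(\alpha_j)^{l_j}\le H\prod_j(\sqrt{\deg\alpha_j+1}\,H(\alpha_j))^{l_j}$, so each of the $g$ factors of $G$ has height at most $H\prod_j(l_j+1)(\deg\alpha_j+1)^{l_j/2}H(\alpha_j)^{l_j}$. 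Multiplying the $g$ factors and using multiplicativity of the Mahler measure together with the classical inequalities $M(f)\le\sqrt{\deg f+1}\,H(f)$ and $H(f)\le 2^{\deg f}M(f)$ yields a bound for $H(G)$ of the form $c_0^{\,g}\,H^{\,g}\prod_j H(\alpha_j)^{l_jg}$ with $c_0$ an explicit constant. Finally, if $p(y)\in\Z[y]$ is the primitive minimal polynomial of $\eta$, then $p\mid G$ in $\Z[y]$ by Gauss's lemma, so $M(p)\le M(G)$ and $H(\eta)=H(p)\le 2^{\deg p}M(p)\le 2^{dg}\sqrt{dg+1}\;H(G)$; combining the last two estimates and folding all surviving numerical factors into a single power of $3$ gives \eqref{içendes}.

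No individual step is difficult, and the overall scheme — norm from $K[y]$ down to $\Z[y]$, Landau–Mignotte estimates, extraction of the minimal factor — is standard; the real work, and the reason one simply cites \'I\c{c}en, is the quantitative bookkeeping: arranging the denominator clearing, the conjugate bounds, the $\sqrt{\deg\alpha_j+1}$ losses, the $g$-fold product constant and the $2^{dg}\sqrt{dg+1}$ from factor extraction so that they all collapse into precisely the exponent $2dg+(l_1+\cdots+l_n)g$ on the $3$ and the exponents $l_jg$ on the $H(\alpha_j)$. I expect faithfully reproducing those constants to consume most of the write-up.
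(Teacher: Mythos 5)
The paper does not prove this lemma at all: it is quoted verbatim from \'I\c{c}en's 1971 paper and used as a black box, so there is no in-paper argument to compare yours against. Your outline --- form the norm $G(y)=\prod_{m}\bigl(\prod_j b_j^{l_j}\bigr)F(y,\tau_m(\alpha_1),\ldots,\tau_m(\alpha_n))\in\Z[y]$, bound its height, and extract the minimal polynomial of $\eta$ as a factor via Gauss's lemma and Landau--Mignotte --- is the standard and correct way to prove statements of this type, and every structural step you take (denominator clearing with $b_j\alpha_j$ integral, Galois invariance, multiplicativity of the Mahler measure, $M(p)\le M(G)$ for $p\mid G$ in $\Z[y]$) is sound. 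You are also right to flag $\Phi\not\equiv 0$ as an implicit hypothesis for the degree bound.

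The one place where your plan, as written, would \emph{not} deliver the stated inequality is the bound $|b_j\tau_m(\alpha_j)|\le M(\alpha_j)\le\sqrt{\deg\alpha_j+1}\,H(\alpha_j)$. Carried through, it leaves a residual factor $\prod_j(\deg\alpha_j+1)^{l_jg/2}$ in your estimate for $H(G)$, and since $\deg\alpha_j$ can be as large as $g$, this factor is \emph{not} absorbable into $3^{2dg+(l_1+\cdots+l_n)g}$ once $\deg\alpha_j>8$; so the final sentence (``all surviving numerical factors collapse into precisely the exponent $2dg+(l_1+\cdots+l_n)g$'') is not something your chosen inequalities will actually produce. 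The fix is to avoid the Mahler measure at this step: by Cauchy's root bound, $b_j\le H(\alpha_j)$ and $b_j|\tau_m(\alpha_j)|\le b_j+H(\alpha_j)\le 2H(\alpha_j)$, which bounds each cleared monomial by $2^{l_j}H(\alpha_j)^{l_j}$ with no dependence on $\deg\alpha_j$, and then working with the length $L$ (which is submultiplicative) rather than comparing $H$ and $M$ factor by factor keeps the numerical losses of the form $c^{dg+\sum l_jg}$ for an absolute $c$. Whether $c$ can be squeezed to exactly match \'I\c{c}en's $3$ is bookkeeping, as you say; for the application in this paper only the shape $c_0^{\,g}H^gH(\alpha_1)^{l_1g}\cdots H(\alpha_n)^{l_ng}$ matters, and your argument (with the Cauchy-bound repair) does establish that. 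A further remark independent of your proof: the lemma as stated requires $d>1$, yet the paper applies it to $F(Y,X)=Y-P(X)$, which has degree $1$ in $Y$; the norm argument of course works for $d=1$ as well, so this is a defect of the statement, not of your proposal.
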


Also, the lower bounds for linear form in logarithms {\it \` a la Baker} lie at the heart of our approach. Among the many extensions (and variations) of the original Baker's bounds, we choose to use the lower bound due to Mignotte and Waldschmidt (see Remark after Theorem in \cite{mig}), which is the most convenient for our purpose. For the sake of preciseness, we state it as a lemma:

\begin{lem}\label{lem:1}
Let $\beta,\a_1,\a_2$ be nonzero algebraic numbers and let $D=[\Q(\beta,\a_1,\a_2):\Q]$. Set $A_j\geq H(\a_j)$ ($j=1,2$) and $B\geq \max\{H(\b), e^{D}\}$. If $\Lambda = \beta\log \a_1-\log \a_2$ is not zero, then
\[
|\Lambda|>\exp\left(-5\cdot 10^{10}D^4(\log A_1)(\log A_2)T^2\right),
\]
where $T:=\log B+\log\log A_1+\log\log A_2+\log D$.
\end{lem}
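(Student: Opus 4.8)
The statement is the Mignotte--Waldschmidt lower bound for a linear form in two logarithms, so the plan is to follow the Gel'fond--Baker method in its refined two-logarithm form; the explicit constant $5\cdot 10^{10}$ and the shape $D^4(\log A_1)(\log A_2)T^2$ emerge only at the very end, after optimizing the auxiliary parameters, so throughout I would keep the degrees and heights as free quantities $L_0,L_1,S,\mathcal{T}$ to be fixed later. Write $\lambda=\log\a_1$ and $\mu=\log\a_2$, so that $\Lambda=\b\lambda-\mu$, and assume for contradiction that $|\Lambda|$ is smaller than the asserted bound. The first step is to construct, by Siegel's lemma (the Thue--Siegel box principle over the number field $K=\Q(\b,\a_1,\a_2)$), a nonzero polynomial $P(X_0,X_1)=\sum_{i<L_0}\sum_{j<L_1}p(i,j)X_0^iX_1^j$ with rational-integer coefficients of controlled height, chosen so that the entire function
\[
F(z)=\sum_{i<L_0}\sum_{j<L_1}p(i,j)\,z^i\,(\a_1^{\b})^{jz}
\]
has the feature that, after replacing $\a_1^{\b}$ by $\a_2$ (legitimate up to the error $e^{\Lambda}-1$), the resulting algebraic numbers vanish together with their first $\mathcal{T}$ derivatives in $z$ at the integers $0\le n<S$. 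This is solvable with small coefficients as soon as the number of unknowns $L_0L_1$ exceeds the number of linear conditions $\sim S\mathcal{T}$ with a suitable margin; the dependence on the degrees and heights of $\a_1,\a_2,\b$ enters here through standard height estimates for the coefficients of the system (of the type recorded in Lemma~\ref{l0}).

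Because $\Lambda$ is assumed minuscule, $F$ and its derivatives are not merely algebraically small but \emph{analytically} small at the interpolation points $n=0,\dots,S-1$. The heart of the argument is then an extrapolation: applying the Schwarz lemma and the maximum-modulus principle on a large disk, the many high-order zeros of $F$ force $|F|$ and its derivatives to remain small on a substantially larger disk, hence small at many further integer points. Iterating this analytic-to-arithmetic bootstrap produces, after a bounded number of rounds, a derivative $F^{(t)}(n_0)$ that is simultaneously (i) extremely small in absolute value and (ii) equal, up to an $e^{\Lambda}$-controlled error, to a nonzero algebraic number $\gamma\in K$ whose height and degree have been tracked throughout.

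The final step pits this analytic upper bound against an arithmetic lower bound: a nonzero $\gamma\in K$ satisfies a Liouville-type inequality $|\gamma|\ge c\,H(\gamma)^{-(D-1)}$ (equivalently $\prod_{\sigma}|\gamma^{(\sigma)}|\ge 1$ via the product formula). Comparing the two estimates yields a contradiction, forcing $|\Lambda|$ above the claimed bound, \emph{provided} the relevant value does not vanish identically. Ruling out identical vanishing is what I expect to be the main obstacle: it demands a zero (multiplicity) estimate showing that $F\not\equiv 0$ cannot have more zeros, counted with multiplicity, than the product of its degrees, so the extrapolation cannot collapse into the trivial case. Classically this is done by a Wronskian/determinant argument, or more robustly by Philippon's zero estimate on the algebraic group $\mathbb{G}_a\times\mathbb{G}_m$; obtaining a clean two-logarithm dependence (the product $(\log A_1)(\log A_2)$ rather than something worse) is precisely where the Mignotte--Waldschmidt refinement of the interpolation scheme is needed. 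To close, I would optimize $L_0,L_1,S,\mathcal{T}$ so as to balance the Siegel construction, the Schwarz estimate, and the Liouville bound: the quantity $T=\log B+\log\log A_1+\log\log A_2+\log D$ and its square arise from this optimization (the two factors of $T$ reflecting the two directions of the extrapolation together with the interpolation multiplicity), the product $(\log A_1)(\log A_2)$ reflects the two logarithms, and $D^4$ records the field-degree bookkeeping, with the numerical constant $5\cdot 10^{10}$ obtained by propagating the constants through each inequality.
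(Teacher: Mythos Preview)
The paper does not prove this lemma at all: it is quoted verbatim from Mignotte--Waldschmidt \cite{mig} as one of the two ``auxiliary results'' (the other being \.I\c{c}en's height inequality), and is invoked as a black box in the proof of Theorem~\ref{main1}. So there is no ``paper's own proof'' to compare against; the intended student contribution here would simply be to cite the reference, not to reproduce the transcendence machinery.

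That said, your sketch is a faithful high-level outline of how the Mignotte--Waldschmidt bound is actually established (auxiliary polynomial via Siegel's lemma over $K$, extrapolation by Schwarz/maximum modulus, Liouville lower bound, zero/multiplicity estimate, parameter optimization producing the $D^4(\log A_1)(\log A_2)T^2$ shape). It is not a complete proof---you have not specified the parameter choices, carried out the height bookkeeping, or executed the zero estimate---but as a proof \emph{plan} it is correct. For the purposes of this paper, however, it is unnecessary: a one-line citation to \cite{mig} is what the authors do and is all that is required.
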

Throughout this text, $\log z$ denotes the main value of the complex logarithm function. 

In the next section, each given inequality holds for all sufficiently large $k$ (unless stated otherwise), which makes valid all the previous ones.

\subsection{The proof}

First, we note that if $R(X)\equiv 0$, then $P(\xi)^{Q(\xi)^{R(\xi)}}=P(\xi)$ is transcendental, for all $\xi\in \mathbb{L}^{(\nu)}$ (since a non-constant polynomial takes the set of transcendental numbers into itself).

Aiming for a contradiction, suppose that $P(\xi)^{Q(\xi)^{R(\xi)}}$ is an algebraic number, say $\g$. In particular, $Q(\xi)^{R(\xi)}\log P(\xi) = \log \g$ (here we used that $\g\neq 0$, since $P(\xi)\neq 0$, because $\xi\not\in \QQ$). In what follows, $m, n$ and $r$ denote the degree of the polynomials $P(x), Q(x)$ and $R(x)$, respectively.

Since $\xi$ is a $\nu$-Liouville number, then there exist infinitely many rational numbers $\pq$ (with $q_k > 1$) and an increasing sequence $(\o_k)_k$ belonging to $\R_{>0}$ such that
\begin{equation} \label{ineq1}
    0<\abs{\xi - \frac{p_k}{q_k}}<\frac{1}{q_k^{\o_k}},
\end{equation}
for all $k\geq 1$, where $\lim_{k\to \infty}\o_k/q_k^{\nu}=\infty$. We may suppose (by reordering the indexes if necessary) that $\pq\in \Omega:=\{z\in \C: P(z)Q(z)R(z)\neq 0\ \mbox{and}\ P(z)\neq 1\}$, for all $k\geq 1$. Now, we claim 
\[
\Lambda_{k}:= Q(\pq)^{R(\pq)}\log P(\pq) - \log{\gamma}
\]
is nonzero for all sufficiently large $k$. In fact, on the contrary, i.e., if $\Lambda_{k} = 0$ for all $k$ belonging to an infinite set of positive integers, say $N^{\prime}$, then the function $h:\Omega\to \C$ given by $h(z):=Q(z)^{R(z)}\log P(z)-\log \g$ vanishes in the set $\{\pq : k\in N^{\prime}\}$ which has an accumulation point (namely $\xi$). By the Identity Theorem for Analytic Functions, one has that $h(z)=0$, for all $z\in \Omega$, which implies that the function $\tilde{h}(z):=h(z)+\log \g$ is the constant function. On the other hand, either $\Omega^{+}:=\{z\in \Omega\cap \Z_{>0}:R(z)\geq 1\}$ or $\Omega^{-}:=\{z\in \Omega\cap \Z_{>0}:R(z)\leq -1\}$ must be an infinite set (actually, only one of them has this property). If $\Omega^{+}$ is infinite, we use that
\[
|\tilde{h}(z)| \geq |Q(z)|^{R(z)}(\log |P(z)|+i\Arg(P(z)))\geq |Q(z)|(\log |P(z)|-2\pi),
\]
to infer that $|\tilde{h}(z)|$ tends to infinity as $z\to \infty$ ($z\in \Omega^{+}$). This contradicts the fact that $\tilde{h}(z)$ is constant. For the case in which $\Omega^{-}$ is an infinite set, we use that $|P(z)|< |z|^{m+1}$ and $|Q(z)|>(|c|/2)|z|^n$ for all sufficiently large $z\in \Omega^{-}$ (here $c$ is the leading coefficient of $Q(z)$). Thus
\[
|\tilde{h}(z)| \leq \frac{1}{|Q(z)|^{-R(z)}}|\log P(z)|\leq \dfrac{(m+1)\log z + 2\pi}{|c/2|z^n}
\]
yielding that $|\tilde{h}(z)|$ tends to $0$ as $z\to \infty$ ($z\in \Omega^{-}$). Therefore $\tilde{h}(z)$ must be the zero function which is absurd, since $\tilde{h}(z)\neq 0$, for all $z\in \Omega$.

In conclusion, $\Lambda_k$ is nonzero, for all sufficiently large $k$. Hence, all hypotheses of Lemma \ref{lem:1} are fulfilled for $\Lambda_k$ and therefore
\begin{equation} \label{ineq1.4}
    \abs{\Lambda_k}> \exp(-5\cdot 10^{10}D_k^4(\log A_{1,k})(\log A_{2,k})T_k^2),
\end{equation}
where $A_{1,k}\geq H(P(p_k/q_k)), A_{2,k}:=H(\g)$, $T_k\geq \log B_k+\log \log A_{1,k}+\log \log A_{2,k}+\log D_k$ (and $B_k=H(Q(\pq)^{R(\pq)})$). Now, we need to make some estimates in order to choose $A_{1,k}, T_k$, and $D_k$.

First, in Lemma \ref{l0}, set $F(Y,X):=Y-P(X)\in \Z[X,Y]$, then $F(P(\pq),\pq)=0$ and so, by that lemma, one deduces that $H(P(\pq))\leq 3^{m+2}(1+|\xi|)^mH(P)q_k^m$. Thus, we can take $A_k:=c_0q_k^m$, where $c_0=3^{m+2}(1+|\xi|)^mH(P)$. 

Next, by setting $\theta_k:=q_k^rR(\pq)$ (which is an integer), we note that $Q(\pq)^{R(\pq)}$ is a root of the polynomial $q_k^{n\theta_k}X^{q_k^r}-(q_k^nQ(\pq))^{\theta_k}\in \Q[X]$ and so it has degree at most $q_k^r$. Thus, we can choose $D_k:=q_k^r$ (since $P(\pq)\in \Q$). Furthermore, one of the polynomials $q_k^{n\theta_k}X^{q_k^r}-(q_k^nQ(\pq))^{\theta_k}$ and $(q_k^nQ(\pq))^{\theta_k}X^{q_k^r}-q_k^{n\theta_k}$ has integers coefficients (depending on the sign of $R(\pq)$). Thus, this polynomial is divisible by the minimal polynomial of $Q(\pq)^{R(\pq)}$ (over $\Z$). In particular, after some manipulations, we arrive at
\[
H(Q(\pq)^{R(\pq)})\leq q_k^{c_1q_k^r},
\]
where $c_1:=3n(1+|\xi|)^{mnr}\max\{H(P),H(Q),H(R)\}$. Thus, we can choose $B_k:=q_k^{c_1q_k^r}$ and $T_k:=c_2q_k^r\log q_k$ (where $c_2:=c_1+\log c_0+m+\log H(\g)+r$).

By using these estimates in (\ref{ineq1.4}), we deduce (after some straightforward calculations) that
\begin{equation} \label{ineq1.5}
    \abs{\Lambda_k}>\exp(-c_3q_k^{6r}(\log q_k)^3),
\end{equation}
where $c_3:=5\cdot 10^{10}mc_2^2(\log c_0)(\log H(\g))$.

On the other hand, we have
\begin{eqnarray} \label{ine1}
\abs{\Lambda_k} & = & \abs{Q(\pq)^{R(\pq)}\log P(\pq)-Q(\xi)^{R(\xi)}\log P(\xi)} \nonumber\\
 & \leq & \abs{\log P(\pq)}\abs{Q(\xi)^{R(\xi)}-Q(\pq)^{R(\pq)}}\\
 & & +\abs{Q(\xi)^{R(\xi)}}\abs{\log P(\xi)-\log P(\pq)}\nonumber
\end{eqnarray}
where we used that $Q(\xi)^{R(\xi)}\log P(\xi) = \log \g$ together with the triangle inequality. It is well-known that for a holomorphic function $\phi: U\to \C$, if $z_1,z_2\in U$ are such that the the line segment with endpoints $z_1, z_2$, denoted by, $[z_1,z_2]$, belongs to $U$, then $|\phi(z_1)-\phi(z_2)|\leq |z_1-z_2|\cdot \max\{|\phi'(\zeta)|: \zeta\in [z_1,z_2]\}$. By using this fact for the holomorphic functions $f:\Omega\to \C$ and $g:\Omega \to \C$ given by
\begin{center}
    $f(z):=\log P(z)$ and $g(z):=Q(z)^{R(z)}$,
\end{center}
we obtain 
\begin{center}
    $\abs{\log P(\xi)-\log P(\pq)}\leq c_4\abs{\xi - \frac{p_k}{q_k}}$ and $\abs{\log P(\xi)-\log P(\pq)}\leq c_5\abs{\xi - \frac{p_k}{q_k}}$,
\end{center}
where $c_4:=\sup_{\zeta\in I_k}|f'(\zeta)|$ and $c_5:=\sup_{\zeta\in I_k}|g'(\zeta)|$ (here $I_k$ is the closed interval with endpoints $\xi$ and $\pq$). So, we get from (\ref{ine1}) that
\begin{equation}\label{ineq1}
    \abs{\Lambda_k} < c_6\abs{\xi - \frac{p_k}{q_k}} < \dfrac{c_6}{q_k^{\o_k}},
\end{equation}
for all sufficiently large $k$ (say $k\geq \kappa_1$), where $c_6:=|\log P(\xi)|+|Q(\xi)^{R(\xi)}|+c_4+c_5+1$.

Now, by combining \eqref{ineq1.5} and \eqref{ineq1}, we obtain
\[
c_6q_k^{-\o_k}>\exp(-c_3q_k^{6r}(\log q_k)^3)
\]
yielding 
\[
\o_k<2c_3q_k^{6r}(\log q_k)^2
\]
for all sufficiently large $k$. However, there exist $\epsilon>0$ such that $\nu=6r+\epsilon$ and so the last inequality can be rewritten as
\[
\dfrac{\o_k}{q_k^{6r+\epsilon}}<2c_3\dfrac{(\log q_k)^2}{q_k^{\epsilon}}
\]
This leads to an absurd, since the left-hand side of the previous inequality tends to infinity, while its right-hand side tends to zero when $k\to \infty$. In conclusion, the number $P(\xi)^{Q(\xi)^{R(\xi)}}$ is transcendental and the proof is complete. 
\qed

\section{Further comments, results, and a conjecture}

We close this article by addressing a discussion on some more technical results and possible problems for further research. In what follows, the implied constants in $\ll$ and $\gg$ do not depend on $k$. 

First, we point out that when $P(X)$ is a constant, it can be replaced by some classes of transcendental numbers. Indeed, let us define the set $\mathcal{T}$ by
\[
\mathcal{T}:=\{e^{\beta_0}\a_1^{\b_1}\cdots \a_n^{\b_n} : n\geq 1\ \mbox{and}\ (\beta_0,\ldots, \b_n,\a_1,\ldots,\a_n)\in \QQ^{2n+1}\}
\]

Note that the preceding set contains all algebraic numbers (case $(n,\b_0,\b_1)=(1,0,1)$) as well as the families of transcendental numbers proved by Hermite and Lindemann (case $(n,\a_1)=(1,1)$), Gelfond and Schneider (case $(n,\b_0)=(1,0)$) and Baker ($n>1$, under some mild hypotheses).

We have the following result:

\begin{prop}\label{p1}
Let $(P(X), Q(X), R(X))$ be a $\QQ$-admissible triple of polynomials with $R(X)\in \Q[X]$. If $\zeta\in \mathcal{T}$, then the numbers
\begin{center}
    $\zeta^{Q(\xi)^{R(\xi)}}$ and $P(\xi)^{Q(\xi)^{R(\xi)}}$
\end{center}
do not belong to $\mathcal{T}$, for all $\xi\in \mathbb{L}^{(\infty)}$. In particular, $\xi^{\alpha}, \xi^{\xi^r}, e^{\xi^{\xi}}$ and $\beta^{\alpha\xi^{\xi}}$ are transcendental numbers, whenever $\alpha\in \QQ\backslash \{0\}$, $\beta\in \QQ\backslash \{0,1\}, r\in \Q$ and $\xi\in \uL$.
\end{prop}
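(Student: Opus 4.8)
The plan is to adapt the argument of Theorem \ref{main1}, replacing the algebraic target $\gamma$ by an arbitrary element $\zeta\in\mathcal{T}$ and tracking what extra information the structure of $\mathcal{T}$ provides. First I would reduce to the generic case: if $R(X)\equiv 0$ the claim follows since $P(\xi)$ (resp.\ $\zeta$) is not in $\mathcal{T}$ by a direct application of Baker's theorem to $\log P(\xi)$, using that $\xi\notin\Qbar$; likewise the cases handled by the admissibility conditions (iii)--(v) are where some derivative is forced to be nonconstant, which is exactly what makes the relevant function $z\mapsto Q(z)^{R(z)}\log P(z)$ (or its variant with $\zeta$) nonconstant on $\Omega$. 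Aiming for a contradiction, suppose $\zeta^{Q(\xi)^{R(\xi)}}=\eta\in\mathcal{T}$, say $\eta=e^{\beta_0}\alpha_1^{\beta_1}\cdots\alpha_n^{\beta_n}$ and $\zeta=e^{\beta_0'}(\alpha_1')^{\beta_1'}\cdots$. Taking logarithms,
\[
Q(\xi)^{R(\xi)}\Bigl(\beta_0'+\sum_j\beta_j'\log\alpha_j'\Bigr)=\beta_0+\sum_j\beta_j\log\alpha_j,
\]
so the quantity $\Lambda := Q(\xi)^{R(\xi)}\log\zeta-\log\eta$ is, up to choosing branches, a $\Qbar$-linear combination of $1$ and logarithms of algebraic numbers (with the coefficient $Q(\xi)^{R(\xi)}$, which is a power of an algebraic number). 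The point is that a lower bound of Baker type still applies to the perturbed form $\Lambda_k := Q(p_k/q_k)^{R(p_k/q_k)}\log\zeta-\log\eta$, since $Q(p_k/q_k)^{R(p_k/q_k)}$ is algebraic of degree $\le q_k^r$ and of height $\le q_k^{c\,q_k^r}$, exactly as in the proof of Theorem \ref{main1}; the only change is that $\log\zeta$ and $\log\eta$ are now linear forms in a bounded number of logarithms of algebraic numbers rather than a single $\log\gamma$, so one invokes the full Baker--Wüstholz (or Matveev) bound instead of the two-term Mignotte--Waldschmidt estimate, with the number of logarithms fixed and the heights of the $\alpha_j,\alpha_j'$ absolute constants.

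Next I would run the same two-sided squeeze. The nonvanishing of $\Lambda_k$ for large $k$ follows from the Identity Theorem argument verbatim: if it vanished along a sequence accumulating at $\xi$, then $z\mapsto Q(z)^{R(z)}\log\zeta-\log\eta$ would be constant on $\Omega$, and the asymptotics as $z\to\infty$ along $\Omega^+$ or $\Omega^-$ give a contradiction exactly as before (here one uses $\log\zeta\neq 0$, which holds because $\zeta\neq 1$; one may assume $\zeta\ne 1$ since otherwise there is nothing to prove). The lower bound then reads $|\Lambda_k|>\exp(-c\,q_k^{6r}(\log q_k)^3)$ for a constant $c$ depending only on $P,Q,R,\zeta,\eta$. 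For the upper bound, the mean value / holomorphic-increment estimate gives $|\Lambda_k|<c'\,|\xi-p_k/q_k|<c'q_k^{-\omega_k}$, using that $z\mapsto Q(z)^{R(z)}$ and the constant $\log\zeta$ are holomorphic near $\xi$. Combining, $\omega_k\ll q_k^{6r}(\log q_k)^2$, which contradicts $\lim_k\omega_k/q_k^{\nu}=\infty$ for every $\nu$ — in particular for $\nu=6r+1$ — since $\xi\in\mathbb{L}^{(\infty)}$. The case of $P(\xi)^{Q(\xi)^{R(\xi)}}\in\mathcal{T}$ is handled in the same way, now with $\log P(p_k/q_k)$ (an algebraic logarithm, approximating $\log P(\xi)$) playing the role previously played by the constant $\log\zeta$, so that $\Lambda_k := Q(p_k/q_k)^{R(p_k/q_k)}\log P(p_k/q_k)-\log\eta$; the height of $P(p_k/q_k)$ is polynomial in $q_k$ by Lemma \ref{l0}, contributing only a $\log q_k$ factor, so the exponent $6r$ is unchanged.

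The main obstacle is bookkeeping in the Baker bound when the target lies in $\mathcal{T}$ rather than in $\Qbar$: one must confirm that the rewritten linear form has a uniformly bounded number of terms and that the "variable" part of the form is concentrated in the single coefficient $Q(p_k/q_k)^{R(p_k/q_k)}$ (whose degree $q_k^r$ and height $q_k^{c q_k^r}$ are the same as in Theorem \ref{main1}), while all other data — the $\alpha_j$, $\alpha_j'$, $\beta_j$, $\beta_j'$, the degree of the number field they generate — are absolute. Once that is in place, the resulting bound has exactly the shape $\exp(-c\,q_k^{6r}(\log q_k)^3)$ and the rest is the same squeeze as before. The final "in particular" assertions are immediate specializations: $\xi^\alpha$ is the case $(P,Q,R)=(\xi,\,\cdot\,,0)$ combined with the $R\equiv0$ reduction and $\alpha\in\Qbar$ forcing $\xi^\alpha\notin\mathcal{T}$ by Gelfond--Schneider-type reasoning inside $\mathcal{T}$; $\xi^{\xi^r}$ is $(P,Q,R)=(X,X,r)$ with $r\in\Q$; $e^{\xi^\xi}$ is $\zeta=e$ with $(Q,R)=(X,X)$; and $\beta^{\alpha\xi^\xi}$ is $\zeta=\beta^\alpha\in\mathcal{T}$ with $(Q,R)=(X,X)$ — in each case $\xi\in\mathbb{L}_{\text{ultra}}\subseteq\mathbb{L}^{(\infty)}$, and admissibility is checked directly from conditions (i)--(v).
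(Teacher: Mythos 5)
Your proposal is correct and follows essentially the same route as the paper: rewrite $\log\zeta$ and $\log\eta$ as linear forms in a bounded number of logarithms of algebraic numbers, isolate the single "growing" coefficient $Q(p_k/q_k)^{R(p_k/q_k)}$ (degree $\le q_k^r$ up to a bounded field degree, height $q_k^{O(q_k^r)}$), apply a Baker-type lower bound in $n$ logarithms, and squeeze against the upper bound $\ll q_k^{-\omega_k}$ to contradict $\xi\in\mathbb{L}^{(\infty)}$. The only cosmetic differences are that the paper invokes its Lemma~\ref{lem:B} (Baker 1977) rather than Baker--W\"ustholz/Matveev and uses the height relations from \cite{DGaa} (rather than Lemma~\ref{l0}) to control $H(P(p_k/q_k))$ when the coefficients are algebraic; since only a bound of the form $\exp(-q_k^{O(1)})$ is needed, your extra precision about the exponent $6r$ is harmless.
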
 

Before the proof, we state the following lower bound for linear forms due to Baker (see \cite{baker}). For the sake of preciseness, we state it as a lemma:

\begin{lem}\label{lem:B}
Let $n> 1$ be an integer and let $\a_1,\ldots, 
\a_{n}, \b_0,\b_1,\ldots, \b_{n}$ be nonzero algebraic numbers. Set $A=\prod_{i=1}^n\log (\max\{H(\a_i), 4\})$ and $B=\max_{i\in [1,n]}\{H(\b_i),4\}$. Let $D=[\mathbb{Q}(\a_1,\ldots, 
\a_n, \b_0,\ldots, \b_{n}):\mathbb{Q}]$. Then, 
\[
|\b_0+\b_1\log \a_1+\cdots +\b_{n}\log \a_{n}|>(AB)^{-(16nD)^{200n}A\log A},
\]
provided that the previous linear form does not vanish.
\end{lem}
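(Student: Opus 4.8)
The plan is to reconstruct the Gelfond--Baker method, since Lemma \ref{lem:B} is the effective form of Baker's theorem on linear forms in logarithms. Write $\Lambda = \beta_0 + \beta_1\log\alpha_1 + \cdots + \beta_n\log\alpha_n$ and suppose, for contradiction, that $\Lambda \neq 0$ while $|\Lambda|$ is smaller than the asserted bound. Fix a number field $K \supseteq \Q(\alpha_1,\dots,\alpha_n,\beta_0,\dots,\beta_n)$ of degree $D$, and introduce integer parameters $L_0$ (a polynomial degree), $L$ (an exponential degree), $M$ (an order of vanishing) and $h$ (a number of interpolation points), calibrated so that the number of unknowns introduced below, of order $L_0 L^n$, comfortably exceeds the number $Mh$ of linear conditions, while $L_0, L, M, h$ have prescribed sizes in terms of $A$, $B$, $n$ and $D$. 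The explicit exponent in the conclusion is ultimately forced by this calibration.

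First I would construct the auxiliary function. Using the hypothetical smallness of $\Lambda$ to treat $\log\alpha_n$ as an approximate $\QQ$-linear combination of $1,\log\alpha_1,\dots,\log\alpha_{n-1}$, I would form the entire function of one complex variable
\[
f(z) = \sum_{\lambda} p(\lambda)\, z^{\lambda_0}\, \prod_{i=1}^{n}\alpha_i^{\mu_i(\lambda)\, z},
\]
where $\lambda=(\lambda_0,\dots,\lambda_n)$ ranges over the box $0\le\lambda_0\le L_0$, $0\le\lambda_i\le L$, and each exponent $\mu_i(\lambda)$ is a linear form in the $\lambda_j$ and the $\beta_j$ arranged so that, at each integer argument $z=s$, the value $f(s)$ and every derivative $f^{(t)}(s)$ differ from an element of $K$ of controlled height and denominator by an error of size $O(|\Lambda|)$. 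Imposing $f^{(t)}(s)=0$ for $0\le t<M$ and $1\le s\le h$ gives a homogeneous linear system in the coefficients $p(\lambda)$; since the unknowns outnumber the equations, Siegel's lemma (the pigeonhole principle applied over $K$) yields a nonzero rational-integer solution $p(\lambda)$ whose height is bounded explicitly in $L_0, L, M, h, A, B, D$.

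The heart of the argument is the extrapolation. I would prove by induction that $f$ vanishes to high order at ever more integer points: at each stage a maximum-modulus (Schwarz lemma) estimate on a disk of growing radius $R$ bounds $|f^{(t)}(s)|$ from above, roughly by $\exp(-cMh\log R)$ times the sup norm of $f$ on that disk, while the sup norm itself grows only like $\exp(cL R)$ because of the exponential factors. On the other hand, up to the $O(|\Lambda|)$ error each $f^{(t)}(s)$ is a nonzero algebraic number in $K$, so a Liouville inequality bounds it from below in terms of its degree, height and denominator --- unless it actually vanishes. Choosing the parameters so that the analytic upper bound beats this arithmetic lower bound (this is precisely where $|\Lambda|$ lying below the threshold is used) forces $f^{(t)}(s)=0$ on the enlarged range of pairs $(t,s)$, and iterating the step a controlled number of times propagates the vanishing over a set of pairs far exceeding $L_0 L^n$.

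Finally, this overdetermined vanishing yields a contradiction: a generalized (confluent) Vandermonde determinant formed from the $\alpha_i$ and the polynomial part $z^{\lambda_0}$ is nonzero, which shows that $f$ cannot vanish to such total order unless every coefficient $p(\lambda)$ is zero, contradicting the nontriviality furnished by Siegel's lemma. I expect the main obstacle to be exactly the extrapolation bookkeeping: one must choose $L_0, L, M, h$, the radii $R$ at each stage, and the number of stages so that simultaneously (a) Siegel's lemma applies, (b) at every stage the analytic bound dominates the Liouville bound, and (c) the terminal count of zeros exceeds what the frequencies permit. Tracking how these competing inequalities depend on $n$, $D$, $A=\prod_{i=1}^{n}\log\max\{H(\alpha_i),4\}$ and $B=\max_i\{H(\beta_i),4\}$ is what produces the explicit exponent $(16nD)^{200n}A\log A$.
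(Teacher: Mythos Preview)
The paper does not prove this lemma: it is stated as a known result and attributed to Baker \cite{baker}, with no argument given beyond the citation. So there is nothing to compare your proposal against in the paper's own text.

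That said, your outline is a faithful high-level description of the Gelfond--Baker method (auxiliary function via Siegel's lemma, extrapolation by alternating Schwarz-type upper bounds and Liouville-type lower bounds, final nonvanishing via a confluent Vandermonde), and this is indeed the route by which such bounds are obtained. It is, however, only a sketch: the specific constant $(16nD)^{200n}A\log A$ requires tracking the parameter choices precisely through the induction, and your write-up stops short of committing to any concrete calibration of $L_0, L, M, h$ and the number of extrapolation stages. For the purposes of this paper, simply citing Baker as the authors do is appropriate; reproving the theorem here would be both out of scope and would demand considerably more detail than your outline supplies.
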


\begin{proof}[Proof of Proposition \ref{p1}]
We shall proceed along the same lines as in the proof of Theorem \ref{main1}. In fact, striving for a contradiction, suppose the existence of $\zeta_1, \zeta_2\in \mathcal{T}$ and $\xi\in \mathbb{L}^{(\infty)}$ such that $\zeta_1^{Q(\xi)^{R(\xi)}}=\zeta_2$. By proceeding along the same lines as before, we obtain that $|\Lambda_k|\ll q_k^{-\omega_k}$, where $\Lambda_k:=Q(\pq)^{R(\pq)}\log \zeta_1 - \log \zeta_2$ is a linear form in logarithms of algebraic numbers. By Lemma \ref{lem:B}, after some calculations, one has $|\Lambda_k|\gg \exp(-q_k^{O(1)})$. We then combine the upper and lower bounds for $|\Lambda|$ to obtain $\o_k\ll q_k^{O(1)}/\log q_k$ which contradicts the fact that $\lim_{k\to \infty}\o_k/q_k^{\nu}=\infty$, for all $\nu>0$ (since $\xi$ is an $\infty$-Liouville number). 

For the transcendence of $P(\xi)^{Q(\xi)^{R(\xi)}}$, we simply mimic the same approach. The novelty here lies
in the fact that $P(X)$ and $Q(X)$ have algebraic coefficients. So, it suffices to prove that the degrees and heights of the numbers $P(\pq)$ and $\gamma_k:=Q(\pq)^{R(\pq)}$ have the reasonable growth order compared to the case of integer coefficients (by reasonable, we mean that if the integer case is $\leq f(k)^{g(k)}$, then the algebraic one is $O(f(k)^{O(g(k))})$). To prove that, let $\mathbb{K}$ be the $d$-degree number field which contains all the coefficients of $P(X)$ and $Q(X)$. Clearly, the degree of $P(\pq)$ is at most $d$. By supposing, without loss of generality, that the integer $q_k^rR(\pq)$ is positive, then $\gamma_k^{q_k^r}$ has degree at most $d$ and so $\gamma_k$ has degree $\leq dq_k^r$. For the heights, we shall use the following relations (which can be found, for instance, in \cite[p. 325]{DGaa}): Let $y_1,\ldots, y_{\ell}$ be algebraic numbers of degree $m_1,\ldots, m_{\ell}$, respectively. Then 
\begin{itemize}
\item $(2^{m_1n}\sqrt{2m_1})^{-1}H(y_1)^{n}\leq H(y_1^n)\leq e^{O(n)} H(y_1)^{n}$;
\item $H(y_1y_2)\ll (H(y_1)H(y_2))^{m_1m_2}$;
\item $H(y_1+\cdots +y_{\ell})\leq e^{O(\ell)}(H(y_1)\cdots H(y_{\ell}))^{m_1\cdots m_{\ell}}$,
\end{itemize}
where the implied constants depend only on the degree of the algebraic numbers. Therefore, by using these estimates, it is straightforward to show that $H(P(\pq))\ll q_k^{m^2d^{m+2}}$ and $H(\gamma_k)\ll q_k^{O(q_k^r)}$. Hence, we confirm that these estimates are reasonable (in our sense) which ensures that the condition of $\xi$ being an $\infty$-Liouville number is enough to complete the proof (we leave the details to the reader). 
\end{proof}

We now continue by recalling the following definitions:

\begin{deff} \label{defh}
Let $k\geq 1$ be an integer. The power tower function of order $k$, $h_k(x)$, is defined for all real number $x>0$ as
\begin{center}
$h_k(x)=x^{\,x^{\,\cdot^{\cdot^{\cdot^{x}}}}}$ ($k$ times)
\end{center}

For all $x\in [e^{-e}, e^{1/e}]$, $h_k(x)$ converges (as $k\to \infty$) to a function $h(x)$ which is named as the infinite power tower function (see Eisenstein \cite{eis}). In other words,
\begin{equation*}
 h(x) = x^{\,x^{\,x^{\,\cdot^{\cdot^{\cdot}}}}}.
\end{equation*}
\end{deff}

The previous definitions imply in the following functional equations
\begin{equation}\label{func}
h(x) = x^{\,h(x)},\ h_{k+1}(x)=x^{h_k(x)}\ \mbox{and}\ h_{k+2}(x)=x^{x^{h_k(x)}}.
\end{equation}

\begin{rem}\label{remLast}
    We remark that $h_1(\xi), h_2(\xi)$ and $h_3(\xi)$ are transcendental numbers whenever $\xi\in \uL$ (by \cite[Prop. 3.2]{MO} and Corollary \ref{main3}).
\end{rem}

Turning back to our problems, we point out that there are at least two directions in order to generalize the previous results, namely, by increasing the power tower of Liouville numbers or by replacing Liouville numbers by $U_m$-numbers (we refer to \cite{bugeaud} for their definition). However, our argument is invalid to deal even with the transcendence of $h_4(\xi)$ and $h_3(\zeta)$, where $\xi$ is a Liouville number and $\zeta$ is a $U_m$-number (with $m>1$). Indeed, in both cases, at least one of the coefficients of the linear forms will be $h_3(r)$ and $h_2(\a)$, for non-integer rational numbers $r$ and non-rational algebraic numbers $\a$. Since $h_2(r)\not\in \Q$, for any $r\in \Q\backslash \Z$, the Gelfond-Schneider theorem implies in the transcendence of both $h_3(r)$ and $h_2(\a)$. Therefore, the linear form $\Lambda$ will not have algebraic coefficients, so it is useless in the sense of Baker's theory.

However, there are some consequences of Proposition \ref{p1} concerning the previous functions:

\begin{cor}\label{clast}
Let $\xi$ be a positive $\infty$-Liouville number. We have:
\begin{itemize}
    \item[(i)] If $\xi\in [e^{-e}, e^{1/e}]$, then $h(\xi)$ is a transcendental number;
    \item[(ii)] For any $k\geq 1$, at least one of the numbers $h_{k}(\xi)$ or $h_{k+1}(\xi)$ is transcendental.
    \item[(iii)] If $h_k(\xi)$ is a rational number, then both $h_{k+1}(\xi)$ and $h_{k+2}(\xi)$ are transcendental numbers.
\end{itemize}
\end{cor}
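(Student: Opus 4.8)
The plan is to deduce all three parts from Proposition \ref{p1} together with the functional equations \eqref{func}, treating $h(\xi)$ and the $h_k(\xi)$ as instances of the power-tower expression $P(\xi)^{Q(\xi)^{R(\xi)}}$ (or simpler sub-expressions) with $P(X)=Q(X)=X$ and appropriate $R$. First I would record the trivial facts we keep reusing: $\xi\in\mathbb{L}^{(\infty)}$ is irrational, hence transcendental and in particular $\xi\notin\{0,1\}$; and for a positive $\xi$ all the towers $h_k(\xi)$ are well-defined positive reals, so $\log$'s behave.

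For part (i): suppose $h(\xi)=\beta$ is algebraic. The functional equation $h(\xi)=\xi^{h(\xi)}$ reads $\beta=\xi^{\beta}$. If $\beta=1$ then $\xi=1$ (for $\xi>0$), contradicting irrationality; so $\beta\in\QQ\setminus\{0,1\}$. Now $\xi^{\beta}=\beta\in\QQ$, and I would apply Proposition \ref{p1} with $(P,Q,R)=(X,1,\beta)$ — wait, $Q\equiv 1$ has $Q'\equiv 0$, so admissibility needs $P'\not\equiv 0$, which holds since $P(X)=X$; and $R(X)=\beta$ is a nonzero constant in $\Q$, consistent with $R\in\Q[X]$. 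Then $\xi^{Q(\xi)^{R(\xi)}}=\xi^{\beta}$ is asserted by the Proposition to not lie in $\mathcal{T}$, hence in particular to be transcendental, contradicting $\xi^\beta=\beta\in\QQ\subseteq\mathcal T$. So $h(\xi)$ is transcendental. (If one worries whether the degenerate case $Q\equiv 1$ is really covered by the quoted statement, the alternative is to invoke the already-cited result of Marques–Oliveira \cite{MO} that $\xi^\alpha$ is transcendental for $\xi\in\mathbb L$, $\alpha\in\QQ\setminus\{0\}$, which is stated in the introduction; I would phrase the argument so it uses whichever is cleanest.)

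For part (ii): suppose for contradiction that both $h_k(\xi)$ and $h_{k+1}(\xi)$ are algebraic. Write $\alpha:=h_k(\xi)\in\QQ$. From $h_{k+1}(\xi)=\xi^{h_k(\xi)}=\xi^{\alpha}$ we get $\xi^{\alpha}\in\QQ$. If $\alpha=0$ then $h_k(\xi)=0$, impossible for a positive tower; so $\alpha\in\QQ\setminus\{0\}$, and then either the Marques–Oliveira result $\xi^\alpha\notin\QQ$ (since $\xi\in\mathbb L$, $\alpha\in\QQ\setminus\{0\}$), or Proposition \ref{p1} with $(P,Q,R)=(X,1,\alpha)$ again, yields the contradiction. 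Hence at least one of $h_k(\xi),h_{k+1}(\xi)$ is transcendental. Part (iii) is the finer version: assume $h_k(\xi)=\alpha\in\Q$ (rational, not merely algebraic). Then $h_{k+1}(\xi)=\xi^{\alpha}$ with $\alpha\in\Q\setminus\{0\}$, so again $h_{k+1}(\xi)$ is transcendental (indeed not in $\mathcal T$, by Proposition \ref{p1} as above, since a single algebraic — even transcendental — exponent exponentiation of a Liouville number escapes $\mathcal T$). For $h_{k+2}(\xi)$, use $h_{k+2}(\xi)=\xi^{\xi^{h_k(\xi)}}=\xi^{\xi^{\alpha}}=P(\xi)^{Q(\xi)^{R(\xi)}}$ with $P(X)=Q(X)=X$ and $R(X)=\alpha\in\Q$; this is a $\QQ$-admissible triple with $R\in\Q[X]$ (here $P'\not\equiv 0$ and $Q'\not\equiv 0$, so (iii)–(v) are vacuous, and (i)–(ii) are clear), so Proposition \ref{p1} gives $\xi^{\xi^{\alpha}}\notin\mathcal T$, in particular transcendental.

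The only real obstacle I foresee is bookkeeping around the degenerate admissibility cases — verifying in each application that the triple $(P,Q,R)$ we feed to Proposition \ref{p1} genuinely satisfies conditions (i)–(v) with the stated field $\mathbb K$, and in part (i)/(ii) deciding whether to route through Proposition \ref{p1} with $Q\equiv 1$ or instead through the cited $\xi^\alpha$-transcendence result (the latter is safer since it is literally stated for all of $\mathbb L$). There is no analytic or number-theoretic difficulty beyond what Proposition \ref{p1} already packages; everything else is elementary manipulation of the functional equations \eqref{func} plus the observation that a positive infinite-Liouville number is irrational and its towers are positive and well-defined. I would write the three parts in order, sharing the one-line lemma ``$\xi\in\mathbb L^{(\infty)}$ positive $\Rightarrow$ $\xi\notin\{0,1\}$ and $h_j(\xi)>0$ for all $j$'' up front.
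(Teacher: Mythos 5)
Your proposal is correct and follows essentially the same route as the paper: combine Proposition \ref{p1} with the functional equations \eqref{func}, deriving (i) and (ii) from the transcendence of $\xi^{\gamma}$ for algebraic $\gamma\neq 0$ and (iii) additionally from that of $\xi^{\xi^{r}}$ for $r\in\Q$. One small fix: to realize $\xi^{\beta}$ as $P(\xi)^{Q(\xi)^{R(\xi)}}$ you should take $(P,Q,R)=(X,\beta,1)$ rather than $(X,1,\beta)$ (the latter gives $\xi^{1^{\beta}}=\xi$, and an irrational algebraic $\beta$ is not allowed as a coefficient of $R$ anyway), which is exactly the ``in particular'' clause $\xi^{\alpha}$ of Proposition \ref{p1} that the paper invokes.
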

\begin{proof}
    The proofs are direct combinations of Proposition \ref{p1} and (\ref{func}). In fact, if $h(\xi)=\gamma\in \QQ\backslash \{0\}$, then $\xi^{\gamma}$ is transcendental contradicting the fact that $\xi^{\gamma}=\gamma$ (this proves (i)). For (ii), we use that if $h_k(\xi)=\gamma$ is algebraic (and so nonzero), then $h_{k+1}(\xi)=\xi^{\gamma}$ is transcendental. To prove (iii), if $h_k(\xi)=r$ is a rational number, then $h_{k+1}(\xi)$ is transcendental (by item (ii)) as well as $h_{k+2}(\xi)=\xi^{\xi^{r}}$.
\end{proof}

We finish by posing a conjecture, which we do believe to be true. 
\begin{conj}
    If $\xi$ is an ultra-Liouville number, then $h_k(\xi)$ is transcendental, for all $k\geq 1$.
\end{conj}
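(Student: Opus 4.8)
The plan is to adapt the linear-forms-in-logarithms machinery already used for Theorem~\ref{main1} and Proposition~\ref{p1}, but now for an arbitrary height $k$ of the power tower. Striving for a contradiction, I would assume $h_k(\xi)=\gamma$ is algebraic for a fixed $k$, and (using the functional equation $h_k(x)=x^{h_{k-1}(x)}$) write $h_{k-1}(\xi)\log\xi=\log\gamma$, so that the relevant linear form in logarithms is
\[
\Lambda_j := h_{k-1}(p_j/q_j)\log(p_j/q_j)-\log\gamma,
\]
where $(p_j/q_j)_j$ is a sequence of rationals approximating $\xi$ with $|\xi-p_j/q_j|<q_j^{-\omega_j}$ and $\omega_j/q_j^\nu\to\infty$ for every $\nu>0$ (this is available because $\xi\in\uL\subseteq\mathbb{L}^{(\infty)}$). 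As in Section~3, I would first establish that $\Lambda_j\neq0$ for all large $j$ via the Identity Theorem applied to $z\mapsto h_{k-1}(z)\log z-\log\gamma$ on a suitable domain, and an asymptotic argument at $z\to\infty$ along positive integers to rule out the constant case.

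The upper bound is the easy half: since $z\mapsto h_{k-1}(z)\log z$ is holomorphic near $\xi$, the mean value inequality for holomorphic functions (exactly as in the derivation of \eqref{ineq1}) gives $|\Lambda_j|\ll|\xi-p_j/q_j|<q_j^{-\omega_j}$. The lower bound is where the obstacle sits. Here $h_{k-1}(p_j/q_j)$ is a tower of rationals, hence an algebraic number; I would need to control both its degree $D_j$ over $\Q$ and its height $H(h_{k-1}(p_j/q_j))$ in terms of $q_j$. Iterating the height/degree estimates for $y^n$, products, and sums quoted from \cite{DGaa} in the proof of Proposition~\ref{p1}, one expects a bound of the shape $D_j\le q_j^{O_k(1)}$ and $\log H(h_{k-1}(p_j/q_j))\le q_j^{O_k(1)}\log q_j$, where the implied constants blow up with $k$ but remain independent of $j$. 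Feeding these into a Baker-type bound (Lemma~\ref{lem:1} or Lemma~\ref{lem:B}) yields $|\Lambda_j|>\exp(-q_j^{O_k(1)}(\log q_j)^{O(1)})$. Comparing with the upper bound gives $\omega_j\ll q_j^{O_k(1)}(\log q_j)^{O(1)}$, hence $\omega_j/q_j^{\nu}\to 0$ for $\nu$ larger than that exponent, contradicting $\xi\in\mathbb{L}^{(\infty)}$.

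The genuine difficulty — and the reason this is stated only as a conjecture rather than a theorem — is that $h_{k-1}(p_j/q_j)\log(p_j/q_j)$ is \emph{not} of the form (coefficient in logs of algebraics), because for $k\ge 4$ the height $h_{k-1}(p_j/q_j)$ appearing in the exponent is itself a transcendental power of a rational once one unwinds it: writing $h_{k-1}(x)=x^{h_{k-2}(x)}$, the quantity $h_{k-2}(p_j/q_j)$ is, by Gelfond--Schneider, transcendental for $k-2\ge 2$, so $\Lambda_j$ ceases to be a linear form in logarithms of algebraic numbers and Baker's theory does not apply (this is precisely the remark made before Corollary~\ref{clast}). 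Thus a proof of the conjecture cannot proceed by the above template for $k\ge 4$; one would need either a fundamentally different transcendence measure for iterated exponentials of Liouville numbers, or a new structural input on $\uL$. The plan above therefore only settles $k\le 3$, recovering Corollary~\ref{main3} and Remark~\ref{remLast}, and I expect the case $k\ge 4$ to require essentially new ideas.
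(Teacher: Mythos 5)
The statement you were given is a \emph{conjecture}; the paper offers no proof of it, and indeed explicitly explains (in the paragraph preceding Corollary \ref{clast}) why its own methods cannot reach $h_k(\xi)$ for $k\geq 4$. Your proposal is therefore not a proof, but you recognize this honestly, and your diagnosis of the obstruction coincides with the authors': once $k\geq 4$, the coefficient $h_{k-1}(p_j/q_j)$ in the linear form $\Lambda_j$ is transcendental, so $\Lambda_j$ is no longer a linear form in logarithms with algebraic coefficients and Baker-type lower bounds are unavailable. Your template for $k\leq 3$ simply recovers Theorem \ref{main1}, Corollary \ref{main3} and Remark \ref{remLast}, which is consistent with the paper.

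One small correction to your justification of the obstruction: for a non-integer rational $r=p/q$, the number $h_2(r)=r^{r}=\bigl(r^{p}\bigr)^{1/q}$ is \emph{algebraic} (irrational, but a $q$-th root of a rational), not transcendental. The transcendence first appears one level up: since $h_2(r)$ is algebraic irrational, Gelfond--Schneider gives that $h_3(r)=r^{h_2(r)}$ is transcendental. So for $k=4$ the problematic coefficient is $h_{k-1}(p_j/q_j)=h_3(p_j/q_j)$, which is transcendental for the reason just stated, not because $h_{k-2}=h_2$ is transcendental. Your headline conclusion (the linear form has a transcendental coefficient for $k\geq 4$) is correct; only the intermediate step is misattributed. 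With that fixed, your assessment that the conjecture remains open and requires ideas beyond Baker's theory is exactly the position the paper itself takes.
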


A partial evidence to support this conjecture is that Corollary \ref{clast} (ii) implies in the existence of infinitely many transcendental numbers among $h_4(\xi), h_5(\xi),\ldots$ whenever $\xi\in \mathbb{L}^{(\infty)}$.

\section*{Acknowledgement}
D.M. was supported, in part, by CNPq-Brazil. M.O.  was supported by the Research Support Grant, Departamento de Matemática, Universidade Estadual do Maranhão. P.T. was supported by the Project of Excellence, Faculty of Science, University of Hradec Kr\'alov\'e, No. 2210/2023-2024. Part of this work was done during a very enjoyable visit of D.M. to IMPA (Rio de Janeiro) in July 2023.


\end{document}